\newtheorem{theorem}{Theorem}[section]
\newtheorem{proposition}[theorem]{Proposition}
\newtheorem{lemma}[theorem]{Lemma}
\theoremstyle{definition}
\theoremstyle{remark}
\newtheorem{remark}[theorem]{Remark}
 \numberwithin{equation}{section}
\newcommand{\N}{\mathbb N}
\newcommand{\C}{\mathbb C}
\newcommand{\R}{\mathbb R}
\newcommand{\Z}{\mathbb Z}
\newcommand{\dx}{{\rm d}x }
\newcommand{\dxi}{{\rm d}\xi }
\newcommand{\dt}{{\rm d}t }
\newcommand{\dgamma}{{\rm d}\gamma}
\newcommand{\subsetc}{\subset_{\operatorname{comp}} }
\newcommand{\supp}{\operatorname{supp}}
\tikzset{join/.code=\tikzset{after node path={%
\ifx\tikzchainprevious\pgfutil@empty\else(\tikzchainprevious)%
edge[every join]#1(\tikzchaincurrent)\fi}}}
\tikzset{>=stealth',every on chain/.append style={join},
         every join/.style={->}}
\tikzstyle{labeled}=[execute at begin node=$\scriptstyle,
\begin{document}
\title[Spaces of quasianalytic functions of Roumieu type]{A linear topological invariant for  spaces of quasianalytic functions of Roumieu type}
\author[A. Debrouwere]{Andreas Debrouwere}
\address{Department of Mathematics: Analysis, Logic and Discrete Mathematics, Ghent University, Krijgslaan 281, 9000 Gent, Belgium}
\email{Andreas.Debrouwere@UGent.be}
\thanks{The author is supported by  FWO-Vlaanderen, via the postdoctoral grant 12T0519N}

\subjclass[2010]{30D60, 46E10, 46A63}
\keywords{Spaces of quasianalytic functions of Roumieu type; linear topological invariants for $(PLS)$-spaces}
\begin{abstract}
We show that the spaces  $\mathcal{E}_{\{\omega\}}(\Omega)$ of ultradifferentiable functions of \linebreak Roumieu type satisfy the dual interpolation estimate for small theta, where $\omega$ is a quasianalytic weight function and $\Omega$ is an arbitrary open subset of $\R^d$. This result was previously shown by  Bonet and Doma\'nski \cite{B-D-2007} under the additional assumptions that $\Omega$ is convex and $\omega$ satisfies the condition $(\alpha_1)$. In particular, our work solves Problem 9.7 in \cite{B-D-2006}.
\end{abstract}

\maketitle
\section{Introduction}
In this article, we study  the spaces $\mathcal{E}_{\{\omega\}}(\Omega)$ of ultradifferentiable functions of Roumieu type, where $\omega$ is a quasianalytic weight function (in the sense of \cite{Braun}). The linear topological properties of the space $\mathcal{A}(\Omega)$ of real analytic functions have been thoroughly investigated and are by now  well understood; see the introduction of \cite{B-D-2007} for an overview of known results. This is much less the case for the general spaces $\mathcal{E}_{\{\omega\}}(\Omega)$ of quasianalytic functions. Our aim is to gain better insight into the locally convex structure of these spaces by showing that they satisfy an important linear topological invariant, the so-called dual interpolation estimate for small theta. 

The dual interpolation estimates (for either small, big or all theta) are linear topological invariants for $(PLS)$-spaces that were introduced by Bonet and Doma\'nski in \cite{B-D-2007} (see also \cite{B-D-2006,B-D-2008}); we refer to Section \ref{sect-DIE} for the definition of these conditions. Roughly speaking, these conditions play a similar role for  $(PLS)$-spaces as the conditions $(DN)$, $(\Omega)$ and their variants do for Fr\'echet spaces.  They are particularly important in the study of the parameter dependence of  solutions of linear partial differential equations on the space of distributions \cite{Domanski-2010, B-D-2008, B-D-2006, Kalmes}.

The following results are known about the dual interpolation estimates for $\mathcal{E}_{\{\omega\}}(\Omega)$:
\begin{itemize}
%\item[$(i)$] A $(DFS)$-space satisfies the dual interpolation for small theta if and only if its strong dual satisfies $(\underline{DN})$, while it satisfies the dual interpolation for big theta (or, equivalently, for all theta) if and only if its strong dual satisfies $(DN)$.
%\item[$(ii)$] The space $\mathcal{D}'(\Omega)$ of distributions, with $\Omega \subseteq \R^d$ arbitrary open,  satisfies the dual interpolation estimate for all theta.
\item[$(i)$] (Vogt) If $\omega$ is non-quasianalytic, the spaces $\mathcal{E}_{\{\omega\}}(\Omega)$, with $\Omega \subseteq \R^d$ arbitrary open,  satisfy the dual interpolation estimate for small theta. This follows from the sequence space representation $\mathcal{E}_{\{\omega\}}(\Omega) \cong (\Lambda'_1(\alpha))^\N$, where $\alpha = (\omega(j^{1/d}))_{j \in \N}$.
\item[$(ii)$] (Bonet-Doma\'nski) If $\omega$ is quasianalytic and satisfies the condition $(\alpha_1)$, the spaces $\mathcal{E}_{\{\omega\}}(\Omega)$, with $\Omega \subseteq \R^d$ open and convex,  satisfy the dual interpolation estimate for small theta \cite[Thm.\ 2.1]{B-D-2007}; see  Remark \ref{DIE-remark} for the meaning of $(\alpha_1)$.
\item[$(iii)$] (Bonet-Doma\'nski) The space $\mathcal{A}(\Omega)$ , with $\Omega \subseteq \R^d$ arbitrary open,  satisfies the dual interpolation estimate for small theta \cite[Cor.\ 2.2]{B-D-2007}.

\end{itemize}
The main goal of this article is to improve $(ii)$ by  showing that the spaces $\mathcal{E}_{\{\omega\}}(\Omega)$ satisfy the dual interpolation estimate for small theta, where $\Omega$ is an \emph{arbitrary} open set and $\omega$ is a \emph{general} quasianalytic weight function (not necessarily satisfying $(\alpha_1)$). This result may also be seen as a refinement of the fact that  $\mathcal{E}_{\{\omega\}}(\Omega)$ is ultrabornological; for $\Omega$ convex this was proven by R\"osner \cite{Rosner}, whereas for general open sets $\Omega$ this was only recently shown by Vindas and the author \cite{D-V-2017}.

Bonet and Doma\'nski proved $(ii)$ by viewing -- via the Fourier-Laplace transform ($\Omega$ is convex!) -- the dual of $\mathcal{E}_{\{\omega\}}(\Omega)$ as a weighted $(LF)$-space of entire functions and then employing certain Prhagm\'en-Lindel\"of principles; this method goes back to R\"osner \cite{Rosner}. Our technique here is completely different and is inspired by H\"ormander's support theorem for quasianalytic functionals \cite{Hormander-1985,Heinrich} and $(iii)$. We remark that, since the dual interpolation estimate for small theta is inherited by quotients, $(iii)$ follows from the convex case via the following deep result \cite[Lemma 6.5]{B-D-2006} (also due to Bonet and Doma\'nski):  The space $\mathcal{A}(\Omega)$ , with $\Omega \subseteq \R^d$ arbitrary open, is a quotient of  $\mathcal{A}(\R^{d+1})$.  We are very much indebted to these authors as $(iii)$ is indispensable for the present work.

The plan of the article is as follows. In the preliminary Sections \ref{sect-DIE} and \ref{sect-ultra} we introduce and collect some basic facts about $(PLS)$-spaces, the dual interpolation estimates and the spaces $\mathcal{E}_{\{\omega\}}(\Omega)$. We define the Fr\'echet spaces of bounded infrahyperfunctions/ultradistributions of Roumieu type in Section \ref{sect-bounded} and show there that these spaces satisfy $(\underline{DN})$. To this end, we use the short-time Fourier transform in the same spirit as in \cite{D-V-2019}. In Section \ref{sect-support}, we present an improvement of H\"ormander's support theorem for quasianalytic functionals.  Although both of these results  serve here as tools, we believe that they are also interesting in their own right. Finally, in Section \ref{sect-main}, we show that the spaces  $\mathcal{E}_{\{\omega\}}(\Omega)$ satisfy the dual interpolation estimate for small theta. Our proof is based on the  results from the previous two sections and $(iii)$.

\section{ $(PLS)$-spaces and the dual interpolation estimates}\label{sect-DIE}
In this section, we introduce $(PLS)$-spaces and the dual interpolation estimates. We point out that our definition of these conditions is slightly different from, but equivalent to, the standard one. Namely, we first introduce a new family of conditions, which we call interpolation estimates, on  inductive spectra of Fr\'echet spaces and, hereafter, use these conditions to define the  dual interpolation estimates for $(PLS)$-spaces. As will be clear from Section \ref{sect-main}, this approach is more convenient for our purposes. 

Given a lcHs (= locally convex Hausdorff space) $X$, we write $X'$ for its topological dual. We always endow $X'$ with the strong topology. 

An \emph{inductive spectrum (of lcHs) $\mathfrak{X} = (X_N, \iota^N_{N+1})_{N \in \N}$} consists of a sequence $(X_N)_{N \in \N}$ of lcHs and continuous linear spectral mappings $\iota^N_{N+1}: X_{N} \rightarrow X_{N+1}$. We set $\iota^N_N = \operatorname{id}_{X_N}$ for $N \in \N$ and $\iota^N_M = \iota^{M-1}_M \circ \cdots \circ \iota^{N}_{N+1}$ for $M > N+1$. Two inductive spectra $\mathfrak{X} = (X_N, \iota^N_{N+1})_{N \in \N}$ and $\mathfrak{Y} = (Y_N, \tau^N_{N+1})_{N \in \N}$ are said to be \emph{equivalent} if  there are increasing sequences  $(M_N)_{N \in \N}$ and $(K_N)_{N \in \N}$ of natural numbers such that $N \leq M_N \leq K_N \leq M_{N+1}$ and continuous linear mappings $T_N: X_{M_N} \rightarrow Y_{K_N}$ and $S_N: Y_{K_N} \rightarrow X_{M_{N+1}}$ such that $S_N \circ T_N = \iota^{M_N}_{M_{N+1}}$ and $T_{N+1} \circ S_N = \tau^{K_N}_{K_{N+1}}$ for all $N \in \N$.

Let  $\mathfrak{X} = (X_N, \iota^N_{N+1})_{N \in \N}$ be an inductive spectrum of Fr\'echet spaces and let $(\| \, \cdot \, \|_{N,n})_{n \in \N}$ be a fundamental increasing sequence of seminorms for $X_N$.  We say that $\mathfrak{X}$ satisfies the \emph{interpolation estimate for small theta} if
\begin{gather*}
\forall N \, \exists M \geq N \,  \forall K \geq M \,  \exists n \, \forall m \geq n  \, \exists \theta_0 \in (0,1) \, \forall \theta \in (0,\theta_0)\,  \exists k \geq m\,  \exists C > 0\,  \forall x \in X_N \, : \\
\| \iota^{N}_M(x)\|_{M,m} \leq C \|x\|^\theta_{N,n} \|\iota^N_K(x) \|^{1-\theta}_{K,k}.
\end{gather*}
If  ``$\exists \theta_0 \in (0,1) \, \forall \theta \in (0,\theta_0)$" is replaced by ``$\exists \theta_0 \in (0,1) \, \forall \theta \in (\theta_0,1)$" (``$\forall \theta \in (0,1)$", respectively), then $\mathfrak{X}$ is said to satisfy the \emph{interpolation estimate for big theta (for all theta}, respectively). These definitions are clearly independent of the chosen sequences $(\| \, \cdot \, \|_{N,n})_{n \in \N}$.

\begin{remark} \label{DN-remark}
Let $X$ be a Fr\'echet space with a fundamental increasing sequence of seminorms $(\| \, \cdot \, \|_{n})_{n \in \N}$. Recall \cite{M-V} that $X$ is said to satisfy $(\underline{DN})$ if 
\begin{gather*}
\exists n \, \forall m \geq n  \, \exists \theta \in (0,1) \, \exists  k \geq m\,  \exists C > 0\,  \forall x \in X  \, : \, \| x\|_{m} \leq C \|x\|^\theta_{n} \|x \|^{1-\theta}_{k}.
\end{gather*}
If  ``$\exists \theta \in (0,1)$" is replaced by `$\forall \theta \in (0,1)$", then $X$ is said to satisfy $(DN)$.  $X$ satifies $(\underline{DN})$ ($(DN)$, respectively) if and only if the constant inductive spectrum  $(X, \operatorname{id}_X)_{N \in \N}$ satisfies the interpolation estimate for small theta (for big or, equivalently, for all theta, respectively). 
\end{remark}
 In the next lemma, we collect several basic facts about the interpolation estimates.
\begin{lemma}\label{basic-theta}
 Let  $\mathfrak{X} = (X_N, \iota^N_{N+1})_{N \in \N}$ and $\mathfrak{Y} = (Y_N, \tau^N_{N+1})_{N \in \N}$  be two inductive spectra of Fr\'echet spaces. 
\begin{itemize}
\item[$(i)$] If $\mathfrak{X}$ and $\mathfrak{Y}$ are equivalent, then $\mathfrak{X}$ satisfies the interpolation estimate (for either small, big or all theta) if and only if $\mathfrak{Y}$ does so.
\item[$(ii)$] Suppose that, for each $N \in \N$, there are linear topological embeddings $T_N: X_N \rightarrow Y_N$ such that $T_{N+1} \circ \iota^N_{N+1} = \tau^N_{N+1} \circ T_N$. If $\mathfrak{Y}$ satisfies the interpolation  (for either small, big or all theta), then so does $\mathfrak{X}$.
\item[$(iii)$] If both $\mathfrak{X}$ and $\mathfrak{Y}$ satisfy the interpolation estimate  (for either small, big or all theta), then so does $(X_N \times Y_N, \iota^N_{N+1} \times \tau^N_{N+1})_{N \in \N}$.
\end{itemize}
\end{lemma}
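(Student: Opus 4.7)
The three statements are all routine consequences of the definitions; the essence is to chase how one can transport the interpolation estimate through continuous linear maps and then line up the quantifiers carefully.

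For part $(i)$, fix fundamental increasing sequences of seminorms $(\|\cdot\|_{N,n})_n$ on $X_N$ and $(|\!|\!|\cdot|\!|\!|_{N,n})_n$ on $Y_N$. The data of the equivalence, $T_N:X_{M_N}\to Y_{K_N}$ and $S_N:Y_{K_N}\to X_{M_{N+1}}$, are continuous, so for every seminorm index on one side there is a seminorm index on the other side and a constant dominating its pullback. Assume $\mathfrak{Y}$ satisfies the interpolation estimate for small theta; to verify it for $\mathfrak{X}$, take $N$, and apply the hypothesis to the starting level $K_N$ in $\mathfrak{Y}$: this yields some threshold $L\ge K_N$. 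Choose $N'$ large enough that $M_{N'}\ge L$; this $M_{N'}$ will serve as the $M$ for $\mathfrak{X}$ at the index $N$. Given $K\ge M_{N'}$, pick $N''$ with $M_{N''}\ge K$, take the corresponding $\mathfrak{Y}$-step up to $K_{N''}$, and invoke the $\mathfrak{Y}$-estimate between $K_N$, $K_{N'}$, $K_{N''}$. The identities $S_N\circ T_N=\iota^{M_N}_{M_{N+1}}$ and $T_{N+1}\circ S_N=\tau^{K_N}_{K_{N+1}}$ allow each $X$-seminorm appearing in the desired inequality to be bounded by a $Y$-seminorm evaluated on $T_{N}x$ (via continuity of $S$'s and $T$'s composed appropriately), and the $\mathfrak{Y}$-estimate then delivers the $\mathfrak{X}$-estimate. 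The same index juggling works verbatim for big theta and for all theta. The reverse direction is symmetric.

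For part $(ii)$, since each $T_N$ is a topological embedding, for every $n$ there exist $n'\ge n$ and $C>0$ with $\|x\|_{N,n}\le C|\!|\!|T_Nx|\!|\!|_{N,n'}$, while continuity of $T_N$ gives $|\!|\!|T_Nx|\!|\!|_{N,n}\le C'\|x\|_{N,n'}$ for suitable $n'$. Given $N$, apply the interpolation estimate of $\mathfrak{Y}$ and replace each occurrence of $|\!|\!|T_Kx|\!|\!|_{K,k}$ by $\|x\|_{K,k'}$ (using continuity of $T_K$) on the right-hand side, and dominate $\|\iota^N_M(x)\|_{M,m}$ by $|\!|\!|T_M\iota^N_M(x)|\!|\!|_{M,m'}=|\!|\!|\tau^N_M(T_Nx)|\!|\!|_{M,m'}$ on the left (using that $T_N$ is an embedding and the commutation $T_{N+1}\circ\iota^N_{N+1}=\tau^N_{N+1}\circ T_N$). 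The desired estimate for $\mathfrak{X}$ drops out after adjusting the indices $m,k$.

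For part $(iii)$, equip $X_N\times Y_N$ with the seminorms $\|(x,y)\|_{N,n}:=\max(\|x\|_{N,n},|\!|\!|y|\!|\!|_{N,n})$. Given $N$, take the $M$'s provided by the interpolation estimates for $\mathfrak{X}$ and $\mathfrak{Y}$ and let $M$ be their max; proceed similarly for $K$, $n$, $m$, $\theta_0$, $k$. Applying the two estimates separately to $x\in X_N$ and $y\in Y_N$ and using $\max(a^\theta c^{1-\theta},b^\theta d^{1-\theta})\le \max(a,b)^\theta\max(c,d)^{1-\theta}$ yields the estimate for the product spectrum with the same regime of $\theta$.

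None of the three parts presents a genuine obstacle; the only nuisance is bookkeeping the nested quantifiers $\forall N\,\exists M\,\forall K\,\exists n\,\forall m\,\exists\theta_0\,\forall\theta\,\exists k$, and in $(i)$ making sure the auxiliary indices $N',N''$ chosen so that $M_{N'}$ and $M_{N''}$ dominate the required levels in $\mathfrak{Y}$ are compatible with the growth of $(M_N)_N$ and $(K_N)_N$. Once this is arranged, each implication reduces to a direct substitution of continuous-map estimates into the inequality from the hypothesis.
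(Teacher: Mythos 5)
The paper itself offers no proof of Lemma \ref{basic-theta} (``The proofs are straightforward and left to the reader''), so your task was precisely to carry out the direct verification. Your arguments are correct in substance and use the intended ingredients: in $(ii)$ and $(i)$ you transport the estimate through continuous maps, bounding the left-hand $X$-seminorm above by a $Y$-seminorm and the two right-hand $Y$-seminorms above by $X$-seminorms; in $(iii)$ you use the $\max$-seminorms on the product together with the inequality $\max(a^\theta c^{1-\theta}, b^\theta d^{1-\theta}) \leq \max(a,b)^\theta \max(c,d)^{1-\theta}$.

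Two small bookkeeping points are worth flagging, both of the kind you already anticipated. In $(iii)$, after setting $M = \max(M_X,M_Y)$, the $\mathfrak{X}$-estimate bounds $\|\iota^N_{M_X}(x)\|_{M_X,\cdot}$ rather than $\|\iota^N_M(x)\|_{M,\cdot}$, so one must insert the continuous link $\iota^{M_X}_{M}$ and adjust the index $m$ accordingly; equivalently, one should first observe that the interpolation estimate at level $(N,M_X,K)$ passes to $(N,M',K)$ for any $M_X \leq M' \leq K$ by continuity of the link maps. In $(i)$, after applying the $\mathfrak{Y}$-hypothesis at level $K_N$ to obtain the threshold $L$, the factorization $\iota^{M_N}_{M_{N'}} = S_{N'-1}\circ \tau^{K_N}_{K_{N'-1}}\circ T_N$ shows that what one actually needs is $K_{N'-1}\geq L$ rather than $M_{N'}\geq L$; since $(K_j)_j$ is increasing and unbounded this is a harmless change, but the condition as written does not quite guarantee it. Neither point affects the validity of the overall argument.
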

\begin{proof}
The proofs are straightforward and left to the reader.
\end{proof}
The following result is a direct consequence of Lemma \ref{basic-theta}. It will be used in Section \ref{sect-main} to show our main result.
\begin{lemma}\label{intersection}
Let  $\mathfrak{X} = (X_N, \iota^N_{N+1})_{N \in \N}$, $\mathfrak{Y} = (Y_N, \tau^N_{N+1})_{N \in \N}$ and  $\mathfrak{Z} = (Z_N, \mu^N_{N+1})_{N \in \N}$  be three inductive spectra of Fr\'echet spaces. Suppose that, for each $N \in \N$, there are linear continuous mappings $T_N: X_N \rightarrow Y_N$ and $S_N: X_N \rightarrow Z_N$   with $T_{N+1} \circ \iota^N_{N+1} = \tau^N_{N+1} \circ T_N$ and $S_{N+1} \circ \iota^N_{N+1} = \mu^N_{N+1} \circ S_N$ such that
$$
X_N \rightarrow Y_N \times Z_N : x \rightarrow (T_N(x), S_N(x))
$$
is a topological embedding.  If both $\mathfrak{Y}$ and $\mathfrak{Z}$ satisfy the interpolation  (for either small, big or all theta), then so does $\mathfrak{X}$.
\end{lemma}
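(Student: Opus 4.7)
The plan is to obtain the conclusion by a direct combination of parts $(ii)$ and $(iii)$ of Lemma \ref{basic-theta}. First I would form the product spectrum
\[
\mathfrak{Y} \times \mathfrak{Z} := (Y_N \times Z_N, \tau^N_{N+1} \times \mu^N_{N+1})_{N \in \N}.
\]
By Lemma \ref{basic-theta}$(iii)$, the fact that both $\mathfrak{Y}$ and $\mathfrak{Z}$ satisfy the interpolation estimate (for whichever flavour of theta) immediately gives the same for $\mathfrak{Y} \times \mathfrak{Z}$.

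Next I would package the two maps into a single map $U_N := T_N \times S_N : X_N \to Y_N \times Z_N$, $x \mapsto (T_N(x), S_N(x))$. By hypothesis each $U_N$ is a linear topological embedding, and the two intertwining identities $T_{N+1} \circ \iota^N_{N+1} = \tau^N_{N+1} \circ T_N$ and $S_{N+1} \circ \iota^N_{N+1} = \mu^N_{N+1} \circ S_N$ combine componentwise to yield
\[
U_{N+1} \circ \iota^N_{N+1} = (\tau^N_{N+1} \times \mu^N_{N+1}) \circ U_N, \qquad N \in \N.
\]
Thus the family $(U_N)_{N \in \N}$ satisfies exactly the hypothesis of Lemma \ref{basic-theta}$(ii)$ with target spectrum $\mathfrak{Y} \times \mathfrak{Z}$. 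Applying that part transfers the interpolation estimate from $\mathfrak{Y} \times \mathfrak{Z}$ back to $\mathfrak{X}$.

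Since every step is an appeal to a part of Lemma \ref{basic-theta} that is stated as already available, there is no real obstacle; the only mild point to verify is that a fundamental sequence of seminorms on $Y_N \times Z_N$ may be taken to be the maximum of the corresponding seminorms on $Y_N$ and $Z_N$ (which is exactly what makes $(iii)$ of Lemma \ref{basic-theta} hold in the first place), and that the embedding property of $U_N$ lets one bound the chosen seminorms on $X_N$ by these sum/maximum seminorms pulled back from $Y_N \times Z_N$. After this, no further calculation is needed.
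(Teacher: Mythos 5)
Your argument is correct and is precisely the ``direct consequence of Lemma \ref{basic-theta}'' that the paper alludes to without writing out: form the product spectrum $\mathfrak{Y}\times\mathfrak{Z}$, apply part~$(iii)$ to see it satisfies the interpolation estimate, then use the embeddings $U_N = T_N\times S_N$ (with the combined intertwining identity) to transfer the estimate to $\mathfrak{X}$ via part~$(ii)$. Nothing to add.
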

Next, we introduce $(PLS)$-spaces. A \emph{projective spectrum (of lcHs) $\mathfrak{X} = (X_N, \varrho^N_{N+1})_{N \in \N}$} consists of a sequence $(X_N)_{N \in \N}$ of lcHs and continuous linear spectral mappings $\varrho^N_{N+1}: X_{N+1} \rightarrow X_{N}$. We set $\varrho^N_N = \operatorname{id}_{X_N}$ for $N \in \N$ and $\varrho^N_M =  \varrho^{N}_{N+1} \circ \cdots \circ \varrho^{M-1}_M$ for $M > N+1$. We call $\mathfrak{X}^* := (X'_N, (\varrho^N_{N+1})^t)_{N \in \N}$ the \emph{dual inductive spectrum of $\mathfrak{X}$}. Two projective spectra $\mathfrak{X} = (X_N, \varrho^N_{N+1})_{N \in \N}$ and $\mathfrak{Y} = (Y_N, \sigma^N_{N+1})_{N \in \N}$ are said to be \emph{equivalent} if  there are increasing sequences  $(M_N)_{N \in \N}$ and $(K_N)_{N \in \N}$ of natural numbers such that $N \leq M_N \leq K_N \leq M_{N+1}$ and continuous linear mappings $T_N: Y_{K_N} \rightarrow X_{M_N}$ and $S_N: X_{M_{N+1}} \rightarrow Y_{K_N}$ such that $T_N \circ S_N = \varrho^{M_N}_{M_{N+1}}$ and $S_{N} \circ T_{N+1} = \sigma^{K_N}_{K_{N+1}}$ for all $N \in \N$. In such a case, the dual inductive spectra $\mathfrak{X}^*$ and $\mathfrak{Y}^*$  are also equivalent. 

Given a projective spectrum $\mathfrak{X} = (X_N, \varrho^N_{N+1})_{N \in \N}$, we define its \emph{projective limit} as
$$
 \operatorname{Proj} \mathfrak{X} = \varprojlim_{N \in \N} X_N := \{(x_N)_{N \in \N} \in \prod_{N \in \N} X_N \, | \,  x_N = \varrho^N_{N+1}(x_{N+1}), \,  \forall n \in \N\}
$$
and endow this space with the projective limit topology, that is, the coarsest topology such that all the mappings $\varrho^N : \operatorname{Proj} \mathfrak{X}  \rightarrow X_N : (x_M)_{M \in \N} \rightarrow x_N$, $N \in \N$, are continuous. $\mathfrak{X}$ is said to be \emph{reduced} if the mapping $\varrho^N$ has dense range for all $N \in \N$. A lcHs $X$ is said to be a \emph{$(PLS)$-space} if there is a projective spectrum $\mathfrak{X}$ of $(DFS)$-spaces such that $X =  \operatorname{Proj} \mathfrak{X}$. Every $(PLS)$-space can be written as the projective limit of a reduced projective spectrum of $(DFS)$-spaces.
%; we say that $ \mathfrak{X}$ is a \emph{defining projective spectrum of $X$}.

Finally, a $(PLS)$-space $X = \operatorname{Proj} \mathfrak{X}$, where $\mathfrak{X}$ is a reduced projective spectrum of $(DFS)$-spaces, is said to satisfy \emph{the dual interpolation estimate for small theta (for big or all theta, respectively)} if  the dual inductive spectrum $\mathfrak{X}^*$ satisfies the interpolation estimate for small theta (for big or all theta, respectively). Since all reduced projective spectra of $(DFS)$-spaces generating $X$ are equivalent \cite[Cor.\ 5.3]{Vogt}, these notions are well-defined by Lemma \ref{basic-theta}$(i)$.
\section{Spaces of ultradifferentiable functions and their duals}\label{sect-ultra}
In this section, we introduce the spaces $\mathcal{E}_{\{\omega\}}(\Omega)$ of ultradifferentiable functions of Roumieu type. Furthermore, we discuss the notion of support for the elements of their dual spaces. 

By a \emph{weight function} (cf.\ \cite{Braun}) we mean a continuous increasing function $\omega: [0,\infty) \rightarrow [0,\infty)$ with $\omega \equiv 0$ on $[0,1]$ satisfying the following properties:
\begin{itemize}
\item[$(\alpha)$] $\omega(2t) = O(\omega(t))$.
\item[$(\beta)$] $\omega(t) = O(t)$.
\item[$(\gamma)$] $\log t = o(\omega(t))$.
\item[$(\delta)$] $\psi: [0,\infty) \rightarrow [0,\infty)$, $\psi(x) :=  \omega(e^x)$ is convex.
\end{itemize}
A weight function $\omega$ is called \emph{quasianalytic} if
\begin{itemize}
\item[$(QA)$]  $\displaystyle \int_0^\infty \frac{\omega(t)}{1+t^2} \dt = \infty$,
\end{itemize}
and \emph{non-quasianalytic} otherwise. The \emph{Young conjugate} $\psi^*$ of $\psi$ is defined as
$$
\psi^*: [0,\infty) \rightarrow [0,\infty), \quad \psi^*(y) := \sup_{x \geq 0} (xy - \psi(x)).
$$
$\psi^*$ is convex and increasing, $ t = o(\psi^*(t))$ and $(\psi^*)^* = \psi$. Moreover, the function $y \rightarrow \psi^*(y)/y$ is increasing on $[0,\infty)$.

Throughout this article, we fix a weight function $\omega$, denote by $\psi$ the function defined in $(\delta)$ and write $\psi^*$  for the Young conjugate of $\psi$. Unless specified, $\omega$ may be either quasianalytic or non-quasianalytic. Furthermore, we define the weight function $\omega_1(t) := \max \{ t-1,0\}$. Notice that
$\psi_1(x) := \omega_1(e^x) = e^x - 1$ and $\psi^*_1(y) = \max\{y\log (y/e)  + 1, 0\}$. 

We shall often use the following technical lemma.
\begin{lemma}\cite[Lemma 2.6(b)]{Heinrich}\label{M12}
For all $l,m,n \in \Z_+$ there are $k \in \Z_+$ and $C > 0$ such that
$$
ly + \frac{1}{m}\psi^*(my+n) \leq \frac{1}{k}\psi^*(ky) + \log C, \qquad  y \geq 0.
$$
\end{lemma}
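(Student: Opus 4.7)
The plan exploits condition $(\alpha)$ on $\omega$ together with convexity of $\psi^*$. Since the left-hand side is continuous in $y$, the bounded range $y \in [0, y_0]$ for any fixed $y_0$ is absorbed into $\log C$; the real work is for $y$ large.

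The first step handles the shift $my \mapsto my + n$ by convexity of $\psi^*$. Namely,
$$\frac{1}{m}\psi^*(my+n) \leq \frac{1}{m}\psi^*(my) + \frac{n}{m}(\psi^*)'_+(my+n),$$
where the error term $(n/m)(\psi^*)'_+(my+n)$ is controlled uniformly via $(\psi^*)'_+(t) \leq \psi^*(2t)/t$ (a standard consequence of convexity and $\psi^*(0)=0$), hence by $(n/m)\cdot \psi^*(2(my+n))/(my+n)$; by the monotonicity of $\psi^*(y)/y$ this is in turn dominated, for $k_0$ chosen suitably, by a linear function of $y$ plus a constant.

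The main ingredient is a scaling estimate derived from $(\alpha)$: translating $\omega(2t) \leq L\omega(t) + L$ through $(\delta)$ gives $\psi(x + \log 2) \leq L\psi(x) + L$, and Legendre duality (using $(\psi(\cdot + c))^*(y) = \psi^*(y) - cy$ and $(L\psi+L)^*(y) = L\psi^*(y/L) - L$, together with the order-reversal of the Legendre transform) produces the reversed estimate $\psi^*(Lz) \geq L\psi^*(z) + (\log 2)Lz - L$. Iterating $j$ times and substituting $z = my$ with $k := L^j m$ leads to
$$\frac{\psi^*(ky)}{k} \geq \frac{\psi^*(my)}{m} + j(\log 2)\, y - C_1,$$
where $C_1 = L/((L-1)m)$ is independent of $j$ and $y$.

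To finish, one chooses $j$ large enough that the gain $j(\log 2)\, y$ dominates $ly$ together with the linear portion of the convexity error controlled in the first step. The desired inequality then follows, with $\log C$ absorbing the uniform constants and the bounded-range contribution. The main technical obstacle is the Legendre-transform manipulation: one must justify $(\psi(\cdot + c))^*(y) = \psi^*(y) - cy$ carefully (mindful of the half-line domain $[0, \infty)$ of $\psi$, so that the optimizer lies above $c$ in the relevant range of $y$) and verify that the iteration constants telescope to a finite bound; the rest is routine bookkeeping.
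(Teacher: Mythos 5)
The paper cites this lemma from Heinrich--Meise [Lemma~2.6(b)] without proof, so there is no in-paper argument to compare against; I evaluate your plan on its own terms. Your scaling estimate derived from $(\alpha)$ and Legendre duality, namely $\psi^*(Lz) \geq L\psi^*(z) + (\log 2)Lz - L$, and its iteration to $\psi^*(ky)/k \geq \psi^*(my)/m + j(\log 2)y - C_1$ with $k = L^jm$, are correct and do deal with the linear term $ly$. The gap is in your first step, where you control the shift $my \mapsto my+n$ by convexity alone. You bound the increment by $\frac{n}{m}(\psi^*)'_+(my+n)$ and then assert that this is dominated by a linear function of $y$ plus a constant. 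That is false in general: monotonicity of $\psi^*(t)/t$ only tells you the error term is increasing, not linearly bounded. For the weight $\omega(t) = \max(0,\log t)^{3/2}$ one has $\psi(x) = x^{3/2}$, $\psi^*(y) = \frac{4}{27}y^3$ and $(\psi^*)'(y) = \frac{4}{9}y^2$, so your error term is quadratic in $y$; the linear gain $j(\log 2)y$ from the iterated scaling estimate then fails to absorb it for any choice of $j$, and the argument collapses.

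The shift must be treated on the $\psi$-side, and the essential input is condition $(\gamma)$, which your plan never invokes. After reducing to $m=1$ (substitute $y \mapsto my$ and divide by $m$), write $ly + \psi^*(y+n) = ly + \sup_{x\geq 0}\bigl(xy - (\psi(x)-nx)\bigr)$ and aim for the pointwise bound $\psi(x) - nx \geq \frac{1}{k}\psi(x+l) - \log C$ on $[0,\infty)$; this gives, via the substitution $z = x+l$, $ly + \psi^*(y+n) \leq \sup_{z\geq l}\bigl(zy - \frac{1}{k}\psi(z)\bigr) + \log C \leq \frac{1}{k}\psi^*(ky) + \log C$. The pointwise bound on $\psi$ itself follows from $(\alpha)$ (iterated to give $\psi(x+l) \leq B\psi(x) + D$) together with $(\gamma)$ (which gives $x = o(\psi(x))$, hence $nx \leq \frac{1}{2}\psi(x)$ for large $x$): take $k \geq 2B$ and absorb the compact range of $x$ into $\log C$. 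This is the route used in the Braun--Meise--Taylor/Heinrich--Meise framework, and it cannot be reproduced from convexity of $\psi^*$ and $(\alpha)$ alone, because those two ingredients do not encode $(\gamma)$, which is exactly what controls the additive shift.
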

Let $\Omega \subseteq \R^d$ be open. We write $K \subsetc \Omega$ to indicate that $K$ is a compact subset of $\Omega$, while $\Omega' \Subset \Omega$ means that $\Omega'$ is a relatively compact open subset of $\Omega$; the latter notation will only be used for \emph{open} sets.

Let $K \subsetc \R^d$ be regular, that is, $\overline{K^\circ} = K$. For $n  \in \Z_+$ we write $\mathcal{E}_{\omega,n}(K)$ for the Banach space consisting of all $\varphi \in C^\infty(K)$ such that
$$
\| \varphi \|_{\mathcal{E}_{\omega,n}(K)} := \sup_{\alpha \in \N^d}\max_{x \in K} |\varphi^{(\alpha)}(x)|\exp \left(-\frac{1}{n}\psi^*(n|\alpha|)\right) < \infty. 
$$
We set
$$
\mathcal{E}_{\{\omega\}}(K) := \varinjlim_{n \in \N} \mathcal{E}_{\omega,n}(K).
$$
$\mathcal{E}_{\{\omega\}}(K)$ is a $(DFS)$-space. Next, let $\Omega \subseteq \R^d$ be open. Choose a sequence $(K_N)_{N \in \N}$ of regular compact subsets in $\Omega$ such that $K_N \subsetc  K^\circ_{N+1}$ for all $N \in \N$ and $\Omega = \bigcup_{N \in \N} K_N$. We define the space of \emph{ultradifferentiable functions of class $\{\omega\}$ (of Roumieu type) in $\Omega$ } as
$$
\mathcal{E}_{\{\omega\}}(\Omega) := \varprojlim_{N \in \N} \mathcal{E}_{\{\omega\}}(K_N).
$$ 
This definition is clearly independent of the chosen sequence $(K_N)_{N \in \N}$. $\mathcal{E}_{\{\omega\}}(\Omega)$ is a $(PLS)$-space. $\omega$ is non-quasianalytic if and only if $\mathcal{E}_{\{\omega\}}(\Omega)$ contains non-trivial compactly supported functions, as follows from the Denjoy-Carleman theorem. $\mathcal{E}_{\{\omega_1\}}(\Omega)$ coincides with the space $\mathcal{A}(\Omega)$ of real analytic functions in $\Omega$. Given another weight function $\sigma$ with $\omega(t) = O(\sigma(t))$, we have that $\mathcal{E}_{\{\sigma\}}(\Omega) \subseteq \mathcal{E}_{\{\omega\}}(\Omega)$ with continuous inclusion. In particular, it holds that $\mathcal{A}(\Omega) \subseteq \mathcal{E}_{\{\omega\}}(\Omega)$ with continuous inclusion. We need the following density result.
%Furthermore, observe that $(QA)$ yields that the restriction mapping $\mathcal{E}_{\{\sigma\}}(\Omega) \rightarrow \mathcal{E}_{\{\omega\}}(\Omega')$ is injective for each pair of open sets $\Omega' \subseteq \Omega$  such that $\Omega'$ meets  every connected component of $\Omega$. 
%Therefore, we may identify $\mathcal{E}_{\{\sigma\}}(\Omega)$ with its image under this mapping; a similar convention will be used for other function spaces.  
\begin{lemma} \label{dense-1}\cite[Cor.\ 3.3]{Heinrich}
Let $\Omega \subseteq \R^d$ be open. The space $\C[z_1, \ldots, z_d]$ of entire polynomials is dense in $\mathcal{E}_{\{\omega\}}(\Omega)$.
\end{lemma}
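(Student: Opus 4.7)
My plan is to argue by duality via the Hahn--Banach theorem: it suffices to show that any continuous linear functional $u \in \mathcal{E}_{\{\omega\}}(\Omega)'$ which annihilates $\C[z_1,\ldots,z_d]$ must vanish. The key tool is the Fourier--Laplace transform of compactly supported $\mathcal{E}_{\{\omega\}}$-functionals.

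Since $\mathcal{E}_{\{\omega\}}(\Omega) = \varprojlim_N \mathcal{E}_{\{\omega\}}(K_N)$ is a countable projective limit, any continuous $u$ factors through some $\mathcal{E}_{\{\omega\}}(K)'$ with $K = K_N \subsetc \Omega$ regular, so I may treat $u$ as ``compactly supported''. I would then introduce
$$
\widehat{u}(\zeta) := \langle u_x, e^{-i x\cdot \zeta}\rangle, \qquad \zeta \in \C^d.
$$
For each $\zeta$ the function $x\mapsto e^{-ix\cdot\zeta}$ lies in $\mathcal{A}(\R^d) \subseteq \mathcal{E}_{\{\omega\}}(K)$, and the assignment $\zeta \mapsto e^{-ix\cdot\zeta}|_K$ is holomorphic as an $\mathcal{E}_{\{\omega\}}(K)$-valued function: its Taylor polynomials
$$
\sum_{|\alpha| \le M} \frac{(-i)^{|\alpha|}}{\alpha!}\,\zeta^\alpha x^\alpha
$$
converge to $e^{-ix\cdot\zeta}$ in some fixed step $\mathcal{E}_{\omega,n}(K)$, uniformly for $\zeta$ in compact subsets of $\C^d$, thanks to the superlinear growth $\psi^*(y)/y\to\infty$ and Lemma \ref{M12}. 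Consequently $\widehat{u}$ is entire on $\C^d$, and interchanging $u$ with this convergent series yields
$$
\widehat{u}(\zeta) = \sum_{\alpha \in \N^d} \frac{(-i)^{|\alpha|}}{\alpha!}\, u(x^\alpha)\,\zeta^\alpha \equiv 0,
$$
because $u$ annihilates every monomial.

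The remaining step, and the main obstacle, is to deduce $u=0$ from $\widehat{u}\equiv 0$, i.e.\ the injectivity of the Fourier--Laplace transform on $\mathcal{E}_{\{\omega\}}(K)'$. In the non-quasianalytic case this is immediate: test functions $\psi \in \mathcal{D}(K^\circ)\subseteq \mathcal{E}_{\{\omega\}}(K)$ are available, and the classical Fourier inversion formula produces $\langle u,\psi\rangle = (2\pi)^{-d}\int \widehat{u}(\xi)\check\psi(\xi)\,d\xi = 0$. The quasianalytic case is substantially more delicate, since no bump functions exist in $\mathcal{E}_{\{\omega\}}(K)$; one must instead represent $\langle u,\psi\rangle$, for an arbitrary $\psi \in \mathcal{E}_{\{\omega\}}(K)$, as a contour integral $\int_\Gamma \widehat{u}(\zeta)\check\psi(\zeta)\,d\zeta$ along a suitable contour $\Gamma \subset \C^d$ deformed into a region where the lack of real decay of $\check\psi$ is compensated by the decay of $|\widehat{u}|$ in imaginary directions (a Paley--Wiener--Ehrenpreis type representation tailored to the weight $\omega$). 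Once this injectivity is in hand, Hahn--Banach concludes the proof.
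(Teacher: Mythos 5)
The paper does not reprove this lemma; it cites \cite[Cor.\ 3.3]{Heinrich}, whose proof uses the heat kernel regularization also appearing in the proof of Lemma~\ref{dense-2}: one shows $\|\varphi - E_j\ast(\chi\varphi)\|_{\mathcal{E}_{\omega,n}(\overline{\Omega'})}\to 0$ for suitable $n$, notes that each $E_j\ast(\chi\varphi)$ is an \emph{entire} function, and then that Taylor partial sums of entire functions converge in $\mathcal{E}_{\{\omega\}}$ on compact sets. That is a direct, constructive approximation argument.

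Your approach (Hahn--Banach plus Fourier--Laplace) is a genuinely different strategy, and the steps you carry out are fine: the reduction to a functional $u$ on some $\mathcal{E}_{\{\omega\}}(K)$ is correct, the holomorphy of $\zeta\mapsto e^{-ix\cdot\zeta}$ into a fixed step $\mathcal{E}_{\omega,n}(K)$ does follow from the superlinearity of $\psi^*$, and hence $\widehat{u}\equiv 0$. The problem is the step you yourself flag as the ``remaining step'': injectivity of the Fourier--Laplace transform on $\mathcal{E}_{\{\omega\}}(K)'$ in the quasianalytic case. That step is not a loose end; it is essentially \emph{equivalent} to the statement you are trying to prove. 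Injectivity of $u\mapsto\widehat{u}$ on $\mathcal{E}_{\{\omega\}}(K)'$ says exactly that the exponentials $\{e^{-ix\cdot\zeta}\}_{\zeta\in\C^d}$ span a dense subspace of $\mathcal{E}_{\{\omega\}}(K)$, and since you have already shown (via Taylor expansion) that every such exponential is a limit of polynomials in $\mathcal{E}_{\{\omega\}}(K)$, density of exponentials and density of polynomials are interchangeable. So your argument reduces the lemma to an equivalent assertion rather than to something independently established. Moreover, the Paley--Wiener--Ehrenpreis contour-shift you sketch does not straightforwardly apply: an element $\psi\in\mathcal{E}_{\{\omega\}}(K)$ is a germ on $K$, not a compactly supported function, so $\check\psi$ is not defined, and one would have to work with a compactly supported extension of $\psi$ with $\mathcal{E}_{(\omega)}$-regularity losses---at which point you are back to the kind of cut-off/regularization machinery that the heat-kernel proof uses directly. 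To make your route sound you would need an independent proof of the quasianalytic injectivity theorem (for example via the Carleman/Denjoy--Carleman quasianalyticity on the ray $\{it:t>0\}$, or a Phragm\'en--Lindel\"of argument on $\widehat{u}$); as written there is a genuine gap.
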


The dual of $\mathcal{E}_{\{\omega\}}(\Omega)$ is denoted by $\mathcal{E}'_{\{\omega\}}(\Omega)$ and its elements are called \emph{quasianalytic functionals of class $\{\omega\}$ (of Roumieu type) in $\Omega$} if $\omega$ is quasianalytic and \emph{compactly supported ultradistributions of class $\{\omega\}$ (of Roumieu type) in $\Omega$} if $\omega$ is non-quasianalytic. The elements of $\mathcal{A}'(\Omega) =  \mathcal{E}'_{\{\omega_1\}}(\Omega)$ are called \emph{analytic functionals in $\Omega$}.  Let $\Omega \subseteq \Omega'$ be open and let  $\sigma$ be another weight function with $\omega(t) = O(\sigma(t))$. Lemma \ref{dense-1} implies that the restriction mapping $\mathcal{E}'_{\{\omega\}}(\Omega) \rightarrow \mathcal{E}'_{\{\sigma\}}(\Omega'): f \rightarrow f_{|\mathcal{E}_{\{\sigma\}}(\Omega')}$ is injective. Hence, we may identify the elements of $\mathcal{E}'_{\{\omega\}}(\Omega)$ with their image under this mapping. In particular, we may view $\mathcal{E}'_{\{\omega\}}(\Omega)$ as a vector subspace of $\mathcal{A}'(\R^d)$. A similar convention will be used for other dual spaces.

Next, we discuss the notion of support for elements $f$ of $\mathcal{E}'_{\{\omega\}}(\R^d)$.  A compact subset $K$ of $\R^d$ is said to be a $\{\omega\}$-\emph{carrier of} $f$ if $f \in  \mathcal{E}'_{\{\omega\}}(\Omega)$ for all $\Omega \subseteq \R^d$ open with  $K \subsetc \Omega$. More precisely, this means that,  for all $\Omega \subseteq \R^d$ open with  $K \subsetc \Omega$, there is a (unique) $f_\Omega \in  \mathcal{E}'_{\{\omega\}}(\Omega)$ such that $f_{\Omega | \mathcal{E}_{\{\omega\}}(\R^d)} = f$.
%Since, by Lemma \ref{dense-1}, the extension $f_\Omega \in  \mathcal{E}'_{\{\omega\}}(\Omega)$ is uni
We denote by $\mathcal{E}'_{\{\omega\}}[K]$ the subspace of $ \mathcal{E}'_{\{\omega\}}(\R^d)$ consisting of all $f$ such that $K$ is a $\{\omega\}$-carrrier of $f$. We endow  $\mathcal{E}'_{\{\omega\}}[K]$ with the coarsest topology  that makes all the mappings
$\mathcal{E}'_{\{\omega\}}[K] \rightarrow  \mathcal{E}'_{\{\omega\}}(\Omega): f \rightarrow f_\Omega$, $\Omega \subseteq \R^d$ open with  $K \subsetc \Omega$, continuous. $\mathcal{E}'_{\{\omega\}}[K]$ is a Fr\'echet space.
%By Lemma \ref{dense-1} we have that
%$$
%\mathcal{E}'_{\{\omega\}}[K] = \varprojlim_{K \subsetc \Omega} \mathcal{E}'_{\{\omega\}}(\Omega)
%$$
%algebraically. We endow $\mathcal{E}'_{\{\omega\}}[K]$ with the projective limit topology induced by this equality. 
%Then, $\mathcal{E}'^{(M_p)}[K]$ is a $(PLS)$-space, while $\mathcal{E}^{\{M_p\}}(\Omega)$ is an $(FS)$-space.
%\begin{remark}
%Assume that, in addition, $M_p$ satisfies $(QA)$. Then, the restriction mapping  $\mathcal{E}'^\ast(\Omega) \rightarrow \mathcal{E}'^\ast(\Omega')$ is injective for each pair of open subsets $\Omega' \subseteq \Omega$ in $\R^d$ such that $\Omega'$ meets  every connected component of $\Omega$. Therefore, we may define the space of \emph{germs of ultradifferentiable functions of class $\ast$ on}  $K$ as the locally convex space
%$$
%\mathcal{E}^\ast[K] := \varinjlim_{K \Subset \Omega} \mathcal{E}^{\ast}(\Omega).
%$$
%Then,
%$$
%(\mathcal{E}^\ast[K])' \cong \mathcal{E}'^\ast[K]
%$$
%algebraically. In the Roumieu case, this isomorphism holds topologically if $(\mathcal{E}^\ast[K])'$ is endowed with the strong topology.
%\end{remark}

It is well-known that for every $f \in \mathcal{A}'(\R^d)$ 
there is a smallest compact set among the $\{\omega_1\}$-carriers of $f$, called the \emph{support of} $f$ and denoted by $\operatorname{supp}_{\mathcal{A}'}f$. This 
essentially follows from the cohomology of the sheaf of germs of analytic functions \cite{M	artineau, Morimoto}. An elementary proof based on the properties of the Poisson 
transform of analytic functionals is given in \cite[Sect.\ 9.1]{Hormander}. See \cite{Matsuzawa} for a proof by means of the heat kernel method. If $\omega$ is non-quasianalytic, the existence of cut-off functions in $\mathcal{E}_{\{\omega\}}(\R^d)$ implies that there is a smallest compact set  among the $\{\omega\}$-carriers of $f \in \mathcal{E}'_{\{\omega\}}(\R^d)$. Moreover, this set coincides with $\operatorname{supp}_{\mathcal{A}'}f$. The corresponding result in the quasianalytic case, which is much more difficult to show, was proven by H\"ormander  \cite{Hormander-1985}  for quasianalytic functionals defined via weight sequences. Heinrich and Meise \cite{Heinrich} showed the analogue statement in the weight function setting by adapting H\"ormander's proof.  More precisely, the following result holds.

\begin{theorem}\cite[Thm.\ 4.12]{Heinrich}\label{support} 
 For every $f \in \mathcal{E}'_{\{\omega\}}(\R^d)$  there is a smallest compact set among the $\{\omega\}$-carriers of $f$ and this set coincides with  $\operatorname{supp}_{\mathcal{A}'} f$.
 \end{theorem}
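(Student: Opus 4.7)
The easy inclusion is: every $\{\omega\}$-carrier $K$ of $f$ satisfies $\operatorname{supp}_{\mathcal{A}'}f\subseteq K$. Indeed, for any open $\Omega\supseteq K$ the continuous inclusion $\mathcal{A}(\Omega)\hookrightarrow\mathcal{E}_{\{\omega\}}(\Omega)$ allows one to restrict the extension $f_\Omega\in\mathcal{E}'_{\{\omega\}}(\Omega)$ to an analytic-functional extension of $f$; Lemma~\ref{dense-1} (density of polynomials) makes this restriction compatible with the identification $\mathcal{E}'_{\{\omega\}}(\R^d)\subseteq \mathcal{A}'(\R^d)$, so $K$ is an analytic carrier. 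Hence the real content lies in the converse: $K_0:=\operatorname{supp}_{\mathcal{A}'}f$ is itself a $\{\omega\}$-carrier.

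\noindent\textbf{Reduction.} I would isolate the following intersection property of $\{\omega\}$-carriers: if $K_1,K_2$ are $\{\omega\}$-carriers of $f$, then so is $K_1\cap K_2$. Granting this and using that $f\in\mathcal{E}'_{\{\omega\}}(\R^d)$ admits a continuity estimate $|\langle f,\varphi\rangle|\leq C\|\varphi\|_{\mathcal{E}_{\omega,n}(K_\ast)}$ for some compact $K_\ast$ (hence a compact $\{\omega\}$-carrier), one runs a standard filtered-intersection argument: for any open $\Omega\supset K_0$, finitely many $\{\omega\}$-carriers $K_1,\ldots,K_r\subseteq K_\ast$ of $f$ have intersection contained in $\Omega$ (by compactness in $K_\ast$ combined with the easy inclusion applied to the finite intersection, which by the intersection property is again a $\{\omega\}$-carrier and thus contains $K_0$). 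Then $K_1\cap\cdots\cap K_r$ is the sought $\{\omega\}$-carrier inside $\Omega$, so $f\in\mathcal{E}'_{\{\omega\}}(\Omega)$.

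\noindent\textbf{Intersection property via heat-kernel regularisation.} Given $\Omega\supset K_1\cap K_2$ open, pick $\Omega_i\supset K_i$ open with $\Omega_1\cap\Omega_2\subseteq\Omega$, and let $f_i\in\mathcal{E}'_{\{\omega\}}(\Omega_i)$ denote the corresponding extensions. In the non-quasianalytic case the construction of $f_\Omega$ is trivialised by a Roumieu-type partition of unity; since $\omega$ is quasianalytic no such cutoffs exist, so I would follow Hörmander and regularise by the heat kernel $E_t(x):=(4\pi t)^{-d/2}e^{-|x|^2/(4t)}$. Setting
$$
u_i(x,t):=\langle f_i,E_t(x-\cdot)\rangle, \qquad x\in\R^d,\ 0<t\leq 1,
$$
and estimating $\|E_t(x-\cdot)\|_{\mathcal{E}_{\omega,n}(K'_i)}$ on an intermediate compact $K_i\subset K'_i\subset\Omega_i$ via Lemma~\ref{M12} (which trades growth in $|\alpha|$ for growth in $1/t$) yields pointwise bounds of the schematic form
$$
|u_i(x,t)|\leq C\exp\!\Big(\tfrac{1}{k}\psi^*\!\big(k\,d(x,K_i)/t\big)\Big),\qquad x\notin K_i.
$$
Quasianalyticity $(QA)$, combined with a Phragmén--Lindelöf step in the variable $t$, then forces $u_1=u_2$ on their common domain, yielding a single regularisation $u$ whose boundary value at $t=0^+$, paired with a smooth cutoff $\chi\in C^\infty_c(\Omega_1\cap\Omega_2)$ equal to $1$ near $K_1\cap K_2$, defines the required functional $\langle f_\Omega,\varphi\rangle:=\lim_{t\to 0^+}\int u(x,t)\chi(x)\varphi(x)\,\mathrm{d}x$ for every $\varphi\in\mathcal{E}_{\{\omega\}}(\Omega)$; the decay of $u$ off $K_1\cap K_2$ is precisely what compensates for the fact that $\chi\notin\mathcal{E}_{\{\omega\}}$.

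\noindent\textbf{Main obstacle.} The decisive quantitative step is the propagation of the $\mathcal{E}_{\{\omega\}}$-continuity of $f_i$ on $K_i$ into sharp decay of $u_i(x,t)$ in the transition region where $x$ approaches $K_1\cap K_2$ from outside both $K_i$'s, and the Phragmén--Lindelöf matching of $u_1$ and $u_2$ there. Both rely essentially on $(QA)$ and on the convexity bookkeeping provided by Lemma~\ref{M12}; this is where Hörmander's original weight-sequence argument has to be translated into the weight-function framework, and is the main technical hurdle of the proof.
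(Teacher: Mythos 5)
The paper does not prove Theorem~\ref{support}; it cites it verbatim from \cite[Thm.\ 4.12]{Heinrich}, so there is no in-paper proof to compare your attempt against. What the paper does prove is the strengthened Theorem~\ref{support-2}, and the technique used there is H\"ormander's Fourier-analytic one: analytic cut-off sequences (Lemma~\ref{nodeloos}), Fourier transform bounds (Lemma~\ref{lemma-1}), and a Littlewood--Paley-type dyadic decomposition (Lemma~\ref{improv}), assembled into the operators $R$ and $T$. Your heat-kernel regularisation is a genuinely different route, inspired by the Matsuzawa approach that the paper mentions only as an alternative proof of the \emph{analytic} support theorem; nothing in the paper suggests it has been carried through in the quasianalytic weight-function setting, and if it works, it would require substantial development (in particular the Phragm\'en--Lindel\"of matching step you flag has no counterpart in the paper's argument).

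More importantly, the reduction step has a genuine logical gap. You propose proving that the intersection of two $\{\omega\}$-carriers is a $\{\omega\}$-carrier and then running a filtered-intersection/compactness argument. This yields the existence of a \emph{smallest} $\{\omega\}$-carrier $K_\infty$, but a priori only $K_\infty \supseteq K_0 := \operatorname{supp}_{\mathcal{A}'}f$. The compactness argument you invoke (``finitely many $\{\omega\}$-carriers have intersection contained in $\Omega$'') presupposes $K_\infty \subseteq \Omega$ for every open $\Omega \supset K_0$, i.e.\ $K_\infty = K_0$, which is precisely what the theorem asserts. The intersection of all $\{\omega\}$-carriers might well strictly contain $K_0$, since there can be analytic carriers of $f$ which are not $\{\omega\}$-carriers. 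What you actually need is the \emph{mixed} intersection property -- if $K_1$ is a $\{\omega\}$-carrier and $K_2$ is merely an \emph{analytic} carrier of $f$, then $K_1 \cap K_2$ is a $\{\omega\}$-carrier -- which is equivalent to showing directly that $K_0$ itself is a $\{\omega\}$-carrier. That is exactly the content of the paper's Theorem~\ref{support-2}, which says $\mathcal{B}'_{\{\omega\}}(\R^d) \cap \mathcal{A}'[K] = \mathcal{E}'_{\{\omega\}}[K]$, and it is where all the real work is; your reduction does not reach it.

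As a minor point, the schematic heat-kernel bound $|u_i(x,t)|\leq C\exp\bigl(\frac{1}{k}\psi^*(k\,d(x,K_i)/t)\bigr)$ is written as a growth estimate (since $\psi^*$ is increasing), whereas what you need off $K_i$ is decay; a sign or a conjugation is missing, and this is precisely the delicate place where quasianalyticity $(QA)$ has to be exploited quantitatively.
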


\section{Spaces of bounded infrahyperfunctions and ultradistributions}\label{sect-bounded}
In this section, we define the Fr\'echet spaces $\mathcal{B}'_{\{\omega\}}(\R^d)$ of bounded infrahyperfunctions/ultradistributions of Roumieu type and establish the mapping properties of the short-time Fourier transform on these spaces. Based upon this, we show two properties of $\mathcal{B}'_{\{\omega\}}(\R^d)$ that will play an important role in the next two sections. 

For $n \in \Z_+$ we write $\mathcal{D}_{L^1,\omega,n}(\R^d)$ for the Banach space consisting of all $\varphi \in C^\infty(\R^d)$ such that $\varphi^{(\alpha)} \in L^1(\R^d)$ for all $\alpha \in \N^d$ and
$$
\| \varphi \|_{\mathcal{D}_{L^1,\omega,n}} := \sup_{\alpha \in \N^d} \|\varphi^{(\alpha)}\|_{L^1}\exp \left(-\frac{1}{n}\psi^*(n|\alpha|)\right) < \infty. 
$$
We define
$$
\mathcal{D}_{L^1,\{\omega\}}(\R^d) :=  \varinjlim_{n \in \N} \mathcal{D}_{L^1,\omega,n}(\R^d). 
$$
$\mathcal{D}_{L^1,\{\omega\}}(\R^d)$ is an $(LB)$-space. Given another weight function $\sigma$ with $\omega(t) = O(\sigma(t))$, we have that $\mathcal{D}_{L^1,\{\sigma\}}(\R^d) \subseteq \mathcal{D}_{L^1,\{\omega\}}(\R^d)$ with continuous inclusion. 
A standard argument shows that $\mathcal{D}_{L^1,\{\omega\}}(\R^d) \subset \mathcal{D}_{L^{\infty},\{\omega\}}(\R^d)$ with continuous inclusion, where $\mathcal{D}_{L^\infty,\{\omega\}}(\R^d)$ is defined in the obvious way. In particular, it holds that
$\mathcal{D}_{L^1,\{\omega\}}(\R^d) \subset \mathcal{E}_{\{\omega\}}(\R^d)$ with continuous inclusion.  The dual of $\mathcal{D}_{L^1,\{\omega\}}(\R^d)$ is denoted by $\mathcal{B}'_{\{\omega\}}(\R^d)$ and its elements are called \emph{bounded infrahyperfunctions of class $\{\omega\}$ (of Roumieu type)} if $\omega$ is quasianalytic and \emph{bounded ultradistributions of class $\{\omega\}$ (of Roumieu type)} if $\omega$ is non-quasianalytic.

Next, we define $\mathcal{S}^{(1)}_{(1)}(\R^d)$ as the Fr\'echet space consisting of all $\varphi \in C^\infty(\R^d)$ such that
$$
\| \varphi \|_{\mathcal{S}^{1,n}_{1,n}} := \sup_{\alpha \in \N^d} \sup_{x \in \R^d} \frac{n^{|\alpha|}|\varphi^{(\alpha)}(x)|e^{n|x|}}{|\alpha|!} < \infty, \qquad \forall n \in \N.
$$
The dual of $\mathcal{S}^{(1)}_{(1)}(\R^d)$ is denoted by $\mathcal{S}'^{(1)}_{(1)}(\R^d)$ and its elements are called \emph{Fourier ultrahyperfunctions}. We need the following technical lemma.
\begin{lemma}\label{dense-2}
 The following dense continuous inclusions hold
$$
\mathcal{S}^{(1)}_{(1)}(\R^d) \hookrightarrow \mathcal{D}_{L^1,\{\omega_1\}}(\R^d)  \hookrightarrow \mathcal{D}_{L^1,\{\omega\}}(\R^d)  \hookrightarrow \mathcal{E}_{\{\omega\}}(\R^d).
$$
Consequently, we may consider
$$
\mathcal{E}'_{\{\omega\}}(\R^d) \subset \mathcal{B}'_{\{\omega\}}(\R^d) \subset \mathcal{B}'_{\{\omega_1\}}(\R^d) \subset \mathcal{S}'^{(1)}_{(1)}(\R^d). 
$$
\end{lemma}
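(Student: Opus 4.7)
The plan is to verify the three continuous inclusions first (two of which follow directly from facts already quoted in the preceding discussion) and then establish the three density claims; the dual inclusion chain then follows by transposition.

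For the continuous part, the middle embedding $\mathcal{D}_{L^1,\{\omega_1\}}\hookrightarrow \mathcal{D}_{L^1,\{\omega\}}$ is a direct application of the weight-ordering inclusion for $\mathcal{D}_{L^1,\{\cdot\}}$ quoted above (take $\sigma=\omega_1$), applicable because property $(\beta)$ combined with $\omega_1(t)=t-1$ for $t\geq 1$ yields $\omega(t) = O(\omega_1(t))$. The right-hand embedding $\mathcal{D}_{L^1,\{\omega\}}\hookrightarrow \mathcal{E}_{\{\omega\}}(\R^d)$ is likewise quoted in the text. For the leftmost embedding $\mathcal{S}^{(1)}_{(1)}\hookrightarrow \mathcal{D}_{L^1,\{\omega_1\}}$, I would integrate the defining pointwise bound, obtaining $\|\varphi^{(\alpha)}\|_{L^1}\leq C_m\|\varphi\|_{\mathcal{S}^{1,m}_{1,m}}|\alpha|!/m^{|\alpha|}$, and then use Stirling together with the explicit $\psi^*_1(y) = y\log(y/e)+1$ for $y\geq 1$ to absorb $|\alpha|!/m^{|\alpha|}$ into $C\exp(\psi^*_1(n|\alpha|)/n)$ for any fixed $n$ and $m=m(n)$ sufficiently large.

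For density of $\mathcal{D}_{L^1,\{\omega\}}$ in $\mathcal{E}_{\{\omega\}}(\R^d)$, I would invoke Lemma \ref{dense-1}: since polynomials are dense in $\mathcal{E}_{\{\omega\}}(\R^d)$, it suffices to approximate each $p\in\C[z_1,\ldots,z_d]$ in the $\mathcal{E}_{\{\omega\}}$-topology by elements of $\mathcal{D}_{L^1,\{\omega\}}$. The natural choice is $p_\epsilon(x):=p(x)e^{-\epsilon|x|^2}$, which lies in $\mathcal{S}^{(1)}_{(1)}\subset \mathcal{D}_{L^1,\{\omega\}}$. On each regular compact $K_N\subsetc\R^d$, expanding $(p-p_\epsilon)^{(\alpha)}$ by Leibniz and using the Hermite-type bound $|(e^{-\epsilon|x|^2})^{(\beta)}|\leq C_{K_N}^{|\beta|}|\beta|!\,\epsilon^{|\beta|/2}$ (valid for $\epsilon\leq 1$) gives $\|p-p_\epsilon\|_{\mathcal{E}_{\omega,n}(K_N)}\to 0$ as $\epsilon\to 0^+$ for some $n$ depending only on $p$ and $K_N$.

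For the remaining two densities, I would apply a two-parameter regularisation: for $\varphi$ in the source space, set
$$
\varphi_{\delta,\epsilon}(x):=(\varphi * G_\delta)(x)\,e^{-\epsilon|x|^2}, \qquad G_\delta(x):=(4\pi\delta)^{-d/2}e^{-|x|^2/(4\delta)}.
$$
Convolution with the entire kernel $G_\delta$ makes $\varphi*G_\delta$ entire and, via $\partial^\alpha(\varphi*G_\delta) = (\partial^\alpha\varphi)*G_\delta$ and Young's inequality, preserves the analytic-type $L^1$-bound on derivatives; multiplication by $e^{-\epsilon|x|^2}$ installs the exponential decay that places $\varphi_{\delta,\epsilon}$ in $\mathcal{S}^{(1)}_{(1)}$. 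A diagonal limit ($\epsilon\to 0^+$ first, by dominated convergence in each weighted $L^1$-seminorm, then $\delta\to 0^+$ by the standard Gauss--Weierstrass mollifier argument) yields $\varphi_{\delta,\epsilon}\to\varphi$ in the target topology. Once all three dense continuous inclusions are at hand, the dual chain $\mathcal{E}'_{\{\omega\}}(\R^d)\subset \mathcal{B}'_{\{\omega\}}(\R^d)\subset \mathcal{B}'_{\{\omega_1\}}(\R^d)\subset \mathcal{S}'^{(1)}_{(1)}(\R^d)$ follows because the transpose of a dense continuous linear map is injective. The main obstacle will be the quantitative control on $\varphi_{\delta,\epsilon}$: after Leibniz-expanding $\partial^\alpha\varphi_{\delta,\epsilon}$, the product of analytic-type derivative bounds for $\varphi*G_\delta$ with Hermite-type derivatives of $e^{-\epsilon|x|^2}$ must re-assemble into an $\exp(\psi^*_1(n|\alpha|)/n)$-weighted $L^1$-bound uniform enough in $\delta,\epsilon$ for the limits to be taken in the ambient Roumieu topology; Lemma \ref{M12} will be the central tool for absorbing the resulting extra factorials, exponential factors, and index shifts into a single $\psi^*$-weight.
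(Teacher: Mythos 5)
Your proposal is sound and follows the same broad strategy as the paper (continuous inclusions from the weight ordering plus explicit derivative estimates; density via mollification and a decaying multiplier; dualize to get the injectivity chain), but the two density arguments are carried out with genuinely different choices. For $\mathcal{D}_{L^1,\{\omega_1\}}(\R^d)$ dense in $\mathcal{E}_{\{\omega\}}(\R^d)$, the paper takes $\varphi$ arbitrary, multiplies by a compactly supported cutoff $\chi$, convolves with the Gaussian $E_j$, and invokes \cite[Prop.\ 3.2]{Heinrich} to get the convergence $E_j\ast(\chi\varphi)\to\varphi$ on each relatively compact $\Omega$; you instead pass through Lemma~\ref{dense-1} (polynomial density) and approximate each polynomial $p$ by $p\,e^{-\epsilon|x|^2}$, controlling the Leibniz terms by Hermite-type Gaussian derivative bounds. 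Both reductions lean on Heinrich--Meise, just through different entry points (Prop.\ 3.2 versus Cor.\ 3.3), so neither is more elementary; yours arguably keeps the citation footprint minimal because Lemma~\ref{dense-1} is already used elsewhere. For $\mathcal{S}^{(1)}_{(1)}(\R^d)$ dense in $\mathcal{D}_{L^1,\{\omega\}}(\R^d)$ the paper uses a single-index sequence $\varphi_j=\chi_j\ast(\theta_j\varphi)$ with $\chi,\theta\in\mathcal{S}^{(1)}_{(1)}$, applying the cutoff \emph{before} mollification and splitting the error into three terms, each bounded via Lemma~\ref{M12}; you use a two-parameter family $(\varphi\ast G_\delta)e^{-\epsilon|x|^2}$ (mollification before cutoff) with a diagonal limit, which is more cumbersome but equivalent.

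Two things you should nail down that your sketch glosses over. First, that $(\varphi\ast G_\delta)e^{-\epsilon|x|^2}$ lies in $\mathcal{S}^{(1)}_{(1)}(\R^d)$ is not automatic from $\varphi\in\mathcal{D}_{L^1,\{\omega\}}(\R^d)$: you must push all derivatives onto the Gaussian kernel, $\|\partial^\alpha(\varphi\ast G_\delta)\|_{L^\infty}\leq\|\varphi\|_{L^1}\|\partial^\alpha G_\delta\|_{L^\infty}\lesssim C_\delta^{|\alpha|}\sqrt{|\alpha|!}$, since the bound $\|\partial^\alpha(\varphi\ast G_\delta)\|_{L^\infty}\leq\|\varphi^{(\alpha)}\|_{L^1}\|G_\delta\|_{L^\infty}$ is only of Roumieu factorial type and is insufficient. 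Second, the diagonal limit needs a fixed Banach step $\mathcal{D}_{L^1,\omega,k}$ in which the entire family converges; you should verify that the index $k$ produced by Lemma~\ref{M12} can be chosen independently of $\delta$ (as it can, since $\|\partial^\alpha(\varphi\ast G_\delta)\|_{L^1}\leq\|\varphi^{(\alpha)}\|_{L^1}$ uniformly in $\delta$), and then pick $\epsilon=\epsilon(\delta)$ accordingly. With these two points filled in, your argument closes; the paper avoids both issues by working with a single index from the start.
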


\begin{proof} It suffices to show that $\mathcal{S}^{(1)}_{(1)}(\R^d)$ is dense in $\mathcal{D}_{L^1,\{\omega\}}(\R^d)$ and that $\mathcal{D}_{L^1,\{\omega_1\}}(\R^d)$ is dense in $\mathcal{E}_{\{\omega\}}(\R^d)$. We start with the latter. Let $\varphi \in \mathcal{E}_{\{\omega\}}(\R^d)$ be arbitrary. It is enough to prove that for every $\Omega \Subset \R^d$ there are $n \in \Z_+$ and a sequence $(\varphi_j)_{j \in \N} \subset \mathcal{D}_{L^1,\{\omega_1\}}(\R^d)$ such that $\|\varphi - \varphi_j \|_{\mathcal{E}_{\omega,n}(\overline{\Omega})} \to 0$ as $j \rightarrow \infty$. Set $E_j(x) = (j/\pi)^{d/2}\exp(-jx^2)$ for $j \in \N$ and choose $\chi \in \mathcal{D}(\R^d)$ such that $\chi \equiv 1$ on a neighbourhood of $\overline{\Omega}$. In \cite[Prop.\ 3.2]{Heinrich}, it is shown that there is $n \in \Z_+$ such that $\|\varphi -E_j \ast (\chi \varphi) \|_{\mathcal{E}_{\omega,n}(\overline{\Omega})} \to 0$ as $j \rightarrow \infty$. The result now follows from the fact that $E_j \ast (\chi \varphi) \in \mathcal{D}_{L^1,\{\omega_1\}}(\R^d)$ for all $j \in \N$. Next, we show that $\mathcal{S}^{(1)}_{(1)}(\R^d)$ is dense in $\mathcal{D}_{L^1,\{\omega\}}(\R^d)$. Let $\varphi \in  \mathcal{D}_{L^1, \{\omega\}}(\R^d)$ be arbitrary. Choose $n \in \Z_+$ such that $\varphi \in  \mathcal{D}_{L^1,\omega,n}(\R^d)$. Let $\chi, \theta \in \mathcal{S}^{(1)}_{(1)}(\R^d)$ be such that $\int_{\R^d}\chi(x)\dx = 1$ and $\theta(0) = 1$. Set $\chi_j(x) = j^d\chi(jx)$ and $\theta_j(x)= \theta(x/j)$ for $j \in \Z_+$. We define $\varphi_j = \chi_j \ast (\theta_j \varphi) \in \mathcal{S}^{(1)}_{(1)}(\R^d)$ for $j \in \Z_+$. By Lemma \ref{M12} there are $k \in \Z_+$ and $C > 0$ such that
$$
y + \frac{1}{n}\psi^*(n(y+1)) \leq \frac{1}{k}\psi^*(ky) + \log C, \qquad y \geq 0.
$$
We claim that $\| \varphi - \varphi_j \|_{\mathcal{D}_{L^1,\omega,k}} \rightarrow 0$ as $j \to \infty$. Notice that
\begin{align*}
\| \varphi - \varphi_j \|_{\mathcal{D}_{L^1,\omega,k}} &\leq \| \varphi - \theta_j\varphi \|_{\mathcal{D}_{L^1,\omega,k}} + \| \theta_j\varphi - \varphi_j \|_{\mathcal{D}_{L^1,\omega,k}} \\
& \leq \sup_{\alpha \in \N^d} \|(1-\theta_j)\varphi^{(\alpha)}\|_{L^1} \exp\left( - \frac{1}{k} \psi^\ast(k|\alpha|)\right) \\  
&+ \sup_{\alpha \in \N^d} \sum_{0 < \beta \leq \alpha} \binom{\alpha}{\beta} \| \theta^{(\beta)}_{j}\varphi^{(\alpha-\beta)} \|_{L^1} \exp\left( - \frac{1}{k} \psi^\ast(k|\alpha|)\right) \\
&+  \| \theta_j\varphi - \varphi_j \|_{\mathcal{D}_{L^1,\omega,k}}.
\end{align*}
We now show that each of these three terms tends to zero as $j \to \infty$. We start with the first one. Observe that for every $\varepsilon > 0$ there is $N  \in \N$ such that
$$
\sup_{j \in \Z_+}\sup_{|\alpha| \geq N} \|(1-\theta_j)\varphi^{(\alpha)}\|_{L^1} \exp\left( - \frac{1}{k} \psi^\ast(k|\alpha|)\right)  \leq \varepsilon.
$$
Hence, the result follows from the fact that $\|(1-\theta_j)\varphi^{(\alpha)}\|_{L^1} \rightarrow 0$ as $j \to \infty$ for all $\alpha \in \N^d$. Next, choose $C' > 0$ such that
$$
\| \theta^{(\alpha)}\|_{L^\infty} \leq C' \exp\left(\frac{1}{n} \psi^\ast(n|\alpha|)\right), \qquad \forall \alpha \in \N^d.
$$
The second term can be bounded as follows
\begin{align*}
&\sup_{\alpha \in \N^d} \sum_{0 < \beta \leq \alpha} \binom{\alpha}{\beta} \| \theta^{(\beta)}_{j}\varphi^{(\alpha-\beta)} \|_{L^1} \exp\left( - \frac{1}{k} \psi^\ast(k|\alpha|)\right) \\
&\leq \frac{1}{j} \sup_{\alpha \in \N^d} \sum_{0 < \beta \leq \alpha} \binom{\alpha}{\beta} \| \theta^{(\beta)}\|_{L^\infty} \|\varphi^{(\alpha-\beta)} \|_{L^1} \exp\left( - \frac{1}{k} \psi^\ast(k|\alpha|)\right) \\
&\leq \frac{1}{j}CC'\| \varphi \|_{\mathcal{D}_{L^1,\omega,n}}
\end{align*}
for all $j \in \Z_+$. Finally, we consider the third term. Notice that
\begin{align*}
\| (\theta_j\varphi)^{(\alpha+e_i)} \|_{L^1}  &\leq \sum_{\beta \leq \alpha+e_i}\binom{\alpha+e_i}{\beta} \|\theta^{(\beta)}\|_{L^\infty} \| \varphi^{(\alpha+e_i-\beta)}\|_{L^1} \\
& \leq 2C'\| \varphi \|_{\mathcal{D}_{L^1,\omega,n}} \exp \left( |\alpha| + \frac{1}{n} \psi^\ast(n(|\alpha|+1))\right) \\
& \leq 2CC'\| \varphi \|_{\mathcal{D}_{L^1,\omega,n}} \exp \left(\frac{1}{k} \psi^\ast(k|\alpha|)\right)
%&\leq \|\psi\|_{\mathcal{S}^{M_p,l}_{A_p,0}}\|\varphi\|_{\mathcal{B}^{M_p,h}_w}
\end{align*}
 for all $\alpha \in \N^d$, $i \in \{1, \ldots, d\}$  and $j \in \Z_+$. Hence,
\begin{align*}
&\| \theta_j\varphi - \varphi_j \|_{\mathcal{D}_{L^1,\omega,k}}  \\
&\leq \sup_{\alpha \in \N^d} \|(\theta_j\varphi)^{(\alpha)} - (\theta_j\varphi)^{(\alpha)} \ast \chi_j \|_{L^1} \exp \left(-\frac{1}{k} \psi^\ast(k|\alpha|)\right) \\
&\leq \sup_{\alpha \in \N^d} \left\| \int_{\R^d} \chi(t)((\theta_j\varphi)^{(\alpha)}(x-(t/j)) - (\theta_j\varphi)^{(\alpha)}(x)) \dt \right\|_{L^1} \exp \left(-\frac{1}{k} \psi^\ast(k|\alpha|)\right) \\
&\leq \frac{1}{j} \sum_{i=1}^d\sup_{\alpha \in \N^d} \left\| \int_{\R^d} \chi(t)t \int_{0}^1(\theta_j\varphi)^{(\alpha+e_i)}(x-(\gamma t/j)) \dgamma  \dt \right\|_{L^1} \exp \left(-\frac{1}{k} \psi^\ast(k|\alpha|)\right) \\
&\leq \frac{1}{j} \| \chi(t)t\|_{L^1} \sum_{i=1}^d\sup_{\alpha \in \N^d} \|(\theta_j\varphi)^{(\alpha+e_i)}\|_{L^1} \exp \left(-\frac{1}{k} \psi^\ast(k|\alpha|)\right) \\
&\leq \frac{1}{j}2dCC' \| \chi(t)t\|_{L^1} \| \varphi \|_{\mathcal{D}_{L^1,\omega,n}}
%&\leq \sup_{\alpha \in \N^d} \sup_{x \in \R^d}\frac{k^{|\alpha|}v(x)}{M_\alpha}\int_{\R^d} |\chi(t)| |(\psi_n\varphi)^{(\alpha)}(x-(t/n)) - (\psi_n\varphi)^{(\alpha)}(x)| \dt \\
%&\leq \frac{1}{n}\sup_{\alpha \in \N^d} \sup_{x \in \R^d}\frac{k^{|\alpha|}v(x)}{M_\alpha}\int_{\R^d} |\chi(t)||t| \sum_{j=1}^d \int_{0}^1|(\psi_n\varphi)^{(\alpha+e_j)}(x-(\gamma t/n)|\dgamma \dt \\
%&\leq \frac{1}{n}dC\| \psi_n\varphi \|_{\mathcal{B}^{M_p,kH}_w}\sup_{\alpha \in \N^d} \sup_{x \in \R^d}\frac{k^{|\alpha|}M_{\alpha+1}}{(kH)^{|\alpha|+1}M_\alpha}\int_{\R^d} |\chi(t)||t| e^{A(\lambda t)}\dt \\
%&\leq \frac{C'}{n}.
%\qedhere
\end{align*}
for all $j \in \Z_+$.
\end{proof}

Next, we study the short-time Fourier transform on $\mathcal{B}'_{\{\omega\}}(\R^d)$. We need some preparation. The translation and modulation operators are denoted by $T_xf(t) = f(t - x)$ and $M_\xi f(t) = e^{2\pi i \xi t} f(t)$, 
$x, \xi \in \R^d$, respectively. The \emph{short-time Fourier transform (STFT)} of $f \in L^2(\R^d)$ with respect to a window function $\chi \in L^2(\R^d)$ is defined as
$$
V_\chi f(x,\xi) := (f, M_\xi T_x\chi)_{L^2} = 
\int_{\R^d} f(t) \overline{\chi(t-x)}e^{-2\pi i \xi t} \dt, \qquad (x, \xi) \in \R^{2d}.
$$
It holds that $\|V_\chi f\|_{L^2(\R^{2d})} = \|\chi \|_{L^2}\|f\|_{L^2}$. In particular, the  mapping $V_\chi : L^2(\R^d) \rightarrow L^2(\R^{2d})$ is continuous. The adjoint of $V_\chi$ is given by the weak integral
$$
V^\ast_\chi F = \int \int_{\R^{2d}} F(x,\xi) M_\xi T_x\chi \dx \dxi, \qquad F \in L^2(\R^{2d}).
$$
If $\chi \neq 0$ and $\gamma \in L^2(\R^d)$ is a synthesis window for $\chi$, that is, $(\gamma, \chi)_{L^2} \neq 0$, then
$$
\frac{1}{(\gamma, \chi)_{L^2}} V^\ast_\gamma \circ V_\chi = \operatorname{id}_{L^2(\R^d)}.
$$
We refer to \cite{Grochenig} for further properties of the STFT. In \cite[Sect.\ 2.3]{D-V-2019}, the STFT is extended to the space $\mathcal{S}'^{(1)}_{(1)}(\R^d)$. We briefly recall the main definitions and results from this work. The STFT of  $f \in  \mathcal{S}'^{(1)}_{(1)}(\R^d)$ with respect to a window function $\chi \in \mathcal{S}^{(1)}_{(1)}(\R^d)$ is defined as
\begin{equation*}
V_\chi f(x,\xi):= \langle f, \overline{M_\xi T_x\chi}\rangle, \qquad (x, \xi) \in \R^{2d}.
%= e^{-2\pi i \xi x}(f \ast M_\xi  \check{\overline {\chi}})(x)
\end{equation*}
Clearly, $V_\chi f$ is continuous on $\R^{2d}$. We define the adjoint STFT of $F \in \mathcal{S}'^{(1)}_{(1)}(\R^{2d})$ as
$$
\langle V^\ast_\chi F, \varphi \rangle := \langle F, \overline{V_\chi\overline{\varphi}} \rangle, \qquad \varphi \in  \mathcal{S}^{(1)}_{(1)}(\R^{2d}).
$$
$V^\ast_\chi F$ is well-defined because the mapping
$V_\chi: \mathcal{S}^{(1)}_{(1)}(\R^d) \rightarrow \mathcal{S}^{(1)}_{(1)}(\R^{2d})
$
is continuous \cite[Prop.\ 2.8]{D-V-2019}. 
\begin{proposition}  \cite[Prop.\ 2.9]{D-V-2019}\label{STFT-duals} Let $\chi \in  \mathcal{S}^{(1)}_{(1)}(\R^d)$. The mappings 
$$
V_\chi: \mathcal{S}'^{(1)}_{(1)}(\R^d) \rightarrow \mathcal{S}'^{(1)}_{(1)}(\R^{2d})
\qquad \mbox{and} \qquad
V^\ast_\chi: \mathcal{S}'^{(1)}_{(1)}(\R^{2d})  \rightarrow  \mathcal{S}'^{(1)}_{(1)}(\R^d) 
$$
are well-defined and continuous. Moreover, if $\gamma \in \mathcal{S}^{(1)}_{(1)}(\R^d)$ is a synthesis window for $\chi$, then
\begin{equation}
\frac{1}{(\gamma, \chi)_{L^2}} V^\ast_\gamma \circ V_\chi = \operatorname{id}_{ \mathcal{S}'^{(1)}_{(1)}(\R^d)}.
\label{complemented-1}
\end{equation}
\end{proposition}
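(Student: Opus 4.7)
The strategy is to transfer the test-function level mapping properties (Prop.\ 2.8) to the duals by transposition, and then establish the inversion formula by density of $\mathcal{S}^{(1)}_{(1)}(\R^d)$ in $\mathcal{S}'^{(1)}_{(1)}(\R^d)$. This mirrors the standard bootstrap from the $L^2$ theory of the STFT to tempered distributions, adapted to the Gelfand--Shilov setting.

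The continuity of $V^*_\chi$ is essentially immediate from its defining formula. Since $V_\chi:\mathcal{S}^{(1)}_{(1)}(\R^d)\to \mathcal{S}^{(1)}_{(1)}(\R^{2d})$ is continuous by Prop.\ 2.8, composing with the continuous involution of complex conjugation shows that $\varphi\mapsto \overline{V_\chi\overline{\varphi}}$ is continuous on $\mathcal{S}^{(1)}_{(1)}(\R^d)$ with values in $\mathcal{S}^{(1)}_{(1)}(\R^{2d})$. Hence, for each $F\in \mathcal{S}'^{(1)}_{(1)}(\R^{2d})$, the form $\varphi\mapsto \langle F,\overline{V_\chi\overline{\varphi}}\rangle$ is continuous on $\mathcal{S}^{(1)}_{(1)}(\R^d)$, giving $V^*_\chi F\in \mathcal{S}'^{(1)}_{(1)}(\R^d)$; the strong continuity of $F\mapsto V^*_\chi F$ is the usual continuity of a transpose operator (bounded sets go to equicontinuous sets). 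For the primal map $V_\chi$ on the dual, I would argue that $V_\chi f$ is a continuous function on $\R^{2d}$ of at most exponential growth, hence embeds canonically into $\mathcal{S}'^{(1)}_{(1)}(\R^{2d})$ by integration against test functions (which decay faster than any exponential). Continuity of $(x,\xi)\mapsto M_\xi T_x \chi$ into $\mathcal{S}^{(1)}_{(1)}(\R^d)$ handles continuity of $V_\chi f$. For the growth: any $f\in \mathcal{S}'^{(1)}_{(1)}(\R^d)$ is bounded by some seminorm $\|\cdot\|_{\mathcal{S}^{1,n}_{1,n}}$, so $|V_\chi f(x,\xi)|\le C\|M_\xi T_x\chi\|_{\mathcal{S}^{1,n}_{1,n}}$, and the pointwise bound $\|M_\xi T_x\chi\|_{\mathcal{S}^{1,n}_{1,n}}\le C_n e^{k(|x|+|\xi|)}\|\chi\|_{\mathcal{S}^{1,k}_{1,k}}$ for some $k=k(n)$ follows by a Leibniz expansion of derivatives of $e^{2\pi i\xi t}\chi(t-x)$. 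Uniformity of this estimate in bounded subsets of $\mathcal{S}'^{(1)}_{(1)}(\R^d)$ gives continuity of $V_\chi$ between the dual spaces.

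For the inversion formula, by the previous two steps $(\gamma,\chi)_{L^2}^{-1}V^*_\gamma\circ V_\chi$ is a continuous endomorphism of $\mathcal{S}'^{(1)}_{(1)}(\R^d)$. On $\mathcal{S}^{(1)}_{(1)}(\R^d)\subset L^2(\R^d)$ it agrees with the identity by the classical $L^2$ inversion formula. Since $\mathcal{S}^{(1)}_{(1)}(\R^d)$ is sequentially weak-$*$ dense in $\mathcal{S}'^{(1)}_{(1)}(\R^d)$ (a standard fact, obtained by convolving with Gaussian mollifiers and multiplying by Gaussian cut-offs to produce approximants in $\mathcal{S}^{(1)}_{(1)}$), both operators coincide on the whole dual, yielding \eqref{complemented-1}.

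The main obstacle I foresee is the growth estimate for $\|M_\xi T_x\chi\|_{\mathcal{S}^{1,n}_{1,n}}$. Translation by $x$ is routine, since $e^{n|t|}\le e^{n|t-x|}e^{n|x|}$ only costs a factor $e^{n|x|}$. The delicate part is the modulation: differentiating $e^{2\pi i\xi t}$ inserts polynomial factors $(2\pi\xi)^{\beta}$ that must be balanced against the $|\alpha|!$ weights in the seminorm. The right trade-off is the elementary estimate $|2\pi\xi|^{|\beta|}/|\beta|!\le e^{2\pi|\xi|}$, which converts polynomial-in-$\xi$ growth into an exponential in $|\xi|$ at the price of increasing the index $n$ in the seminorm. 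Once this bookkeeping is in place, the remaining steps are formal.
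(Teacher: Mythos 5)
This proposition is quoted from \cite[Prop.\ 2.9]{D-V-2019}; the present paper contains no proof of its own, so there is nothing in-paper to compare against.

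Your proposal is the standard duality bootstrap and is essentially correct. The growth estimate works out exactly as you anticipate: after Leibniz's rule, the factor $|2\pi\xi|^{|\beta|}/|\beta|!\le e^{2\pi|\xi|}$ together with the combinatorial contributions of size $2^{|\alpha|}$ (from $\binom{\alpha}{\beta}\le\binom{|\alpha|}{|\beta|}$ and the number of $\beta\le\alpha$) are all absorbed by passing to a larger seminorm index, yielding $\|M_\xi T_x\chi\|_{\mathcal{S}^{1,n}_{1,n}}\le C\,e^{k(|x|+|\xi|)}\|\chi\|_{\mathcal{S}^{1,k}_{1,k}}$ for a suitable $k=k(n)$. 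One point to tighten in the reconstruction step: you invoke sequential weak-$*$ density together with continuity of $(\gamma,\chi)_{L^2}^{-1}V^*_\gamma\circ V_\chi$, but the continuity you established is with respect to the strong dual topology, and strong continuity alone does not license passing to weak-$*$ limits. Two quick repairs: either observe that your Gaussian-mollifier approximants in fact converge in the strong topology of $\mathcal{S}'^{(1)}_{(1)}(\R^d)$, so that the strong continuity you already have suffices; or identify $V_\chi$ on the dual with the (conjugate) transpose of the test-function-level $V^*_\chi$, which requires justifying the interchange of the vector-valued integral $\int\int \Phi(x,\xi)\,\overline{M_\xi T_x\chi}\,\dx\dxi$ with the action of $f$ -- routine given the decay of $\Phi$ and your exponential bound -- so that both $V_\chi$ and $V^*_\gamma$ become weak-$*$ continuous transposes, after which weak-$*$ density (automatic, since the $L^2$-pairing of $\mathcal{S}^{(1)}_{(1)}(\R^d)$ with itself separates points) completes the argument.
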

We define $C_{b,\{\omega\}}(\R^{2d})$ as the Fr\'echet space consisting of all $F \in C(\R^{2d})$ such that
$$
\| F\|_{C_{b,\omega,n}} := \sup_{(x,\xi) \in \R^{2d}} |F(x,\xi)| \exp\left(-\frac{1}{n} \omega(|\xi|)\right) < \infty, \qquad \forall n \in \N.
$$
%Every element of $F \in C_{b,\{\omega\}}(\R^{2d})$ uniquely defines an element of $\mathcal{S}'_{(1)}(\R^{2d})$ via
%$$
%\langle F, \varphi \rangle := \int \int_{\R^{2d}} F(x,\xi) \varphi(x,\xi)\dx\dxi, \qquad \varphi \in \mathcal{S}^{(1)}_{(1)}(\R^{2d}).
%$$
We are ready to establish the mapping properties of the STFT on $\mathcal{B}'_{\{\omega\}}(\R^d)$.
\begin{proposition} \label{STFT-bounded}
Let $\chi \in  \mathcal{S}^{(1)}_{(1)}(\R^d)$. The mappings 
$$
V_\chi: \mathcal{B}'_{\{\omega\}}(\R^d) \rightarrow C_{b,\{\omega\}}(\R^{2d})
\qquad \mbox{and} \qquad
V^\ast_\chi: C_{b,\{\omega\}}(\R^{2d})  \rightarrow  \mathcal{B}'_{\{\omega\}}(\R^d)
$$
are well-defined and continuous. Moreover, if $\gamma \in \mathcal{S}^{(1)}_{(1)}(\R^d)$ is a synthesis window for $\chi$, then
\begin{equation}
\frac{1}{(\gamma, \chi)_{L^2}} V^\ast_\gamma \circ V_\chi = \operatorname{id}_{\mathcal{B}'_{\{\omega\}}(\R^d)}.
\label{complemented-2}
\end{equation}
\end{proposition}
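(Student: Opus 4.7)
The plan is to view both $V_\chi$ and $V^*_\chi$ as restrictions of the corresponding mappings on $\mathcal{S}'^{(1)}_{(1)}$ from Proposition \ref{STFT-duals}, using Lemma \ref{dense-2} to embed $\mathcal{B}'_{\{\omega\}}(\R^d) \hookrightarrow \mathcal{S}'^{(1)}_{(1)}(\R^d)$ and (via property $(\beta)$) $C_{b,\{\omega\}}(\R^{2d}) \hookrightarrow \mathcal{S}'^{(1)}_{(1)}(\R^{2d})$. Once this set-up is in place the inversion formula \eqref{complemented-2} is immediate from \eqref{complemented-1} by restriction, so the essential task reduces to verifying the two target spaces together with continuity of the maps.

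For $V_\chi$, I would establish the key estimate that for every $N \in \Z_+$ there exist $k \in \Z_+$ and $C > 0$ with
$$\|M_\xi T_x\chi\|_{\mathcal{D}_{L^1,\omega,k}} \leq C \exp\left(\tfrac{1}{N}\omega(|\xi|)\right), \qquad (x,\xi) \in \R^{2d}.$$
This follows by applying the Leibniz rule to $(M_\xi T_x\chi)^{(\alpha)}$ (producing factors $(2\pi|\xi|)^{|\beta|}$), using Young's inequality in the form $|\beta|\log(2\pi|\xi|) \leq \frac{1}{n}\omega(2\pi|\xi|) + \frac{1}{n}\psi^*(n|\beta|)$ to separate $\xi$ from $\beta$, and observing that $\chi \in \mathcal{D}_{L^1,\omega,n}(\R^d)$ for every $n$ by Lemma \ref{dense-2}. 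The resulting sum is collected via the superadditivity of $\psi^*$ (yielding $\psi^*(n|\beta|) + \psi^*(n|\alpha - \beta|) \leq \psi^*(n|\alpha|)$), the bound $\sum_\beta \binom{\alpha}{\beta} = 2^{|\alpha|}$, and Lemma \ref{M12}, while property $(\alpha)$ absorbs $\omega(2\pi|\xi|)$ into $\omega(|\xi|)$. Together with the continuity of $f$ on the $(LB)$-space $\mathcal{D}_{L^1,\{\omega\}}(\R^d)$, this gives both $V_\chi f \in C_{b,\{\omega\}}(\R^{2d})$ and the continuity of $V_\chi$.

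For $V^*_\chi$, I would prove the dual estimate: for every $k \in \Z_+$ there exist $k' \in \Z_+$ and $C > 0$ such that
$$\int_{\R^d}|V_\chi \varphi(x,\xi)|\dx \leq C \|\varphi\|_{\mathcal{D}_{L^1,\omega,k}} \exp\left(-\tfrac{1}{k'}\omega(|\xi|)\right), \qquad \varphi \in \mathcal{D}_{L^1,\omega,k}(\R^d).$$
This is obtained from the integration-by-parts identity $(-2\pi i\xi)^\alpha V_\chi \varphi(x,\xi) = (-1)^{|\alpha|}\int_{\R^d}\partial_t^\alpha[\varphi(t)\overline{\chi(t-x)}]\,e^{-2\pi i\xi t}\dt$, Fubini in $x$, and the same Young/superadditivity/Lemma \ref{M12} machinery as above; the minus sign arises from optimizing $\frac{1}{k'}\psi^*(k'|\alpha|) - |\alpha|\log(2\pi|\xi|)$ over $\alpha$ via the Young involution $(\psi^*)^* = \psi$. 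Property $(\gamma)$ ensures $\int_{\R^d}\exp(-\frac{1}{2k'}\omega(|\xi|))\dxi < \infty$, so for $F \in C_{b,\omega,2k'}(\R^{2d})$ the defining integral
$$\langle V^*_\chi F, \varphi\rangle = \int_{\R^{2d}} F(x,\xi)\,\overline{V_\chi\bar\varphi(x,\xi)}\dx\dxi$$
is absolutely convergent and bounded by $C'\|F\|_{C_{b,\omega,2k'}}\|\varphi\|_{\mathcal{D}_{L^1,\omega,k}}$, giving $V^*_\chi F \in \mathcal{B}'_{\{\omega\}}(\R^d)$ together with continuity of $V^*_\chi$. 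The principal technical obstacle in the whole argument is the careful orchestration of indices in the Young/Lemma \ref{M12} manipulations so that the indices controlling $\omega(|\xi|)$ and those controlling $\psi^*(|\alpha|)$ balance in the required direction in each of the two estimates; once they are in place, the inversion formula \eqref{complemented-2} follows at once from \eqref{complemented-1} via the compatibility of the two definitions of $V_\chi$ and $V^*_\chi$ on $\mathcal{S}'^{(1)}_{(1)}$.
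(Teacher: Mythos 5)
Your proposal is correct and follows essentially the same route as the paper: the same pair of estimates $\|M_\xi T_x\chi\|_{\mathcal{D}_{L^1,\omega,k}} \lesssim \exp(\tfrac{1}{n}\omega(|\xi|))$ and $\|V_{\overline{\chi}}\varphi(\cdot,-\xi)\|_{L^1} \lesssim \|\varphi\|_{\mathcal{D}_{L^1,\omega,n}}\exp(-\tfrac{1}{k}\omega(|\xi|))$ are derived via Leibniz, superadditivity of $\psi^*$, Lemma~\ref{M12}, and the Legendre/Young relation $\sup_y(y\log t - \tfrac1k\psi^*(ky)) = \tfrac1k\omega(t)$ together with condition $(\alpha)$, and then \eqref{complemented-2} is obtained by restriction of \eqref{complemented-1} using the dense embeddings of Lemma~\ref{dense-2}. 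The only organizational difference is that the paper obtains continuity of $V^*_\chi$ from the closed graph theorem (once well-definedness is known), while you read it off directly from the bilinear bound $|\langle V^*_\chi F,\varphi\rangle|\le C\|F\|_{C_{b,\omega,2k'}}\|\varphi\|_{\mathcal D_{L^1,\omega,k}}$; both are valid.
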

\begin{proof}
We start with $V_\chi$. It suffices to show that for each $n \in \Z_+$ there are $k \in \Z_+$ and $C > 0$ such that
$$
\|M_\xi T_x\chi\|_{\mathcal{D}_{L^1,\omega,k}} \leq C\exp\left (\frac{1}{n}\omega(|\xi|)\right), \qquad  (x,\xi) \in \R^{2d}.
$$
Condition $(\alpha)$ implies that there are $m \in \Z_+$ and $C' > 0$ such that
$$
\omega(4\pi t) \leq m\omega(t) + \log C', \qquad t \geq 0,
$$
while Lemma \ref{M12} yields that there are $l \in \Z_+$ and $C'' > 0$ such that
$$
y + \frac{1}{n}\psi^*(ny) \leq \frac{1}{l}\psi^*(ly) + \log C'', \qquad  y \geq 0.
$$
Set $k = \max\{nm,l\}$. Then,
\begin{align*}
\|M_\xi T_x\chi\|_{\mathcal{D}_{L^1,\omega,k}} &= \sup_{\alpha \in \N^d} \| (M_\xi T_x\chi)^{(\alpha)} \|_{L^1} \exp \left(-\frac{1}{k} \psi^\ast(k|\alpha|)\right) \\
&\leq  \sup_{\alpha \in \N^d} \sum_{\beta \leq \alpha} \binom{\alpha}{\beta} (2\pi |\xi|)^{|\beta|} \|\chi^{(\alpha-\beta)}\|_{L^1} \exp \left(-\frac{1}{k} \psi^\ast(k|\alpha|)\right) \\
&\leq \sup_{\alpha \in \N^d} \frac{1}{2^{|\alpha|}}\sum_{\beta \leq \alpha} \binom{\alpha}{\beta} (4\pi |\xi|)^{|\beta|}   \exp \left(-\frac{1}{k} \psi^\ast(k|\beta|)\right) \times \\
&\phantom{\leq} \|\chi^{(\alpha-\beta)}\|_{L^1} \exp \left(|\alpha-\beta|-\frac{1}{k} \psi^\ast(k|\alpha-\beta|)\right) \\
&\leq C''\| \chi\|_{\mathcal{D}_{L^1,\omega,n}} \exp\left( \frac{1}{k} \sup_{y \geq 0}(\log(4\pi|\xi|)y - \psi^*(y))\right)\\
&\leq C''\| \chi\|_{\mathcal{D}_{L^1,\omega,n}} \exp\left( \frac{1}{k} \omega(4\pi|\xi|)\right)\\
&\leq C'C''\| \chi\|_{\mathcal{D}_{L^1,\omega,n}} \exp\left( \frac{1}{n} \omega(|\xi|)\right)
\end{align*}
for all $(x,\xi) \in \R^{2d}$. Next, we consider $V^*_\chi$. It suffices to show that $V^*_\chi(F) \in  \mathcal{B}'_{\{\omega\}}(\R^d)$ for all $F \in C_{b,\{\omega\}}(\R^{2d})$. Once this is established, the continuity of $V^\ast_\chi: C_{b,\{\omega\}}(\R^{2d})  \rightarrow  \mathcal{B}'_{\{\omega\}}(\R^d)$ follows from the closed graph theorem and the continuity of $V^\ast_\chi: \mathcal{S}'^{(1)}_{(1)}(\R^{2d})  \rightarrow  \mathcal{S}'^{(1)}_{(1)}(\R^d)$.  We claim that for every $n \in \Z_+$ there are $k \in \Z_+$ and $C > 0$ such that
$$
\|V_{\overline{\chi}}\varphi(\,\cdot \,,-\xi)\|_{L^1} \leq C \| \varphi \|_{\mathcal{D}_{L^1,\omega,n}} \exp \left ( -\frac{1}{k} \omega(|\xi|) \right), \qquad  \xi \in \R^d,
$$
for all $\varphi \in \mathcal{D}_{L^1,\omega,n}(\R^d)$. This would imply that, for all $F \in C_{b,\{\omega\}}(\R^{2d})$,
$$
f: \mathcal{D}_{L^1,\{\omega\}}(\R^d) \rightarrow \C: \varphi \rightarrow \int \int _{\R^{2d}} F(x,\xi) V_{\overline{\chi}}\varphi(x,-\xi) \dx \dxi
$$
defines an element of $f \in \mathcal{B}'_{\{\omega\}}(\R^d)$. Since $f_{|\mathcal{S}^{(1)}_{(1)}(\R^d)} = V^*_{\chi} F$, this shows that $V^*_{\chi} F \in  \mathcal{B}'_{\{\omega\}}(\R^d)$.
We now prove the claim. Fix $n \in \Z_+$. By Lemma \ref{M12} there are $k \in \Z_+$ and $C'> 0$ such that
$$
\log(\sqrt{d}/\pi)y + \frac{1}{n}\psi^*(n(y+1)) \leq \frac{1}{k}\psi^*(ky) + \log C', \qquad  y \geq 0.
$$
For $|\xi| \geq 1$ and $y \geq 0$ it holds that
\begin{align*}
|\xi|^{y} \|V_{\overline{\chi}}\varphi(\, \cdot \,,-\xi)\|_{L^1} &\leq |\xi|^{\lceil y \rceil} \|V_{\overline{\chi}}\varphi(\, \cdot \,,-\xi)\|_{L^1} \\
&\leq\sqrt{d}^{\lceil y \rceil}   \max_{|\alpha| = \lceil y \rceil }  \|\xi^\alpha V_{\overline{\chi}}\varphi(\, \cdot \,,-\xi)\|_{L^1} \\
&\leq (\sqrt{d}/(2\pi))^{\lceil y \rceil} \max_{|\alpha| = \lceil y \rceil} \sum_{\beta \leq \alpha} \binom{\alpha}{\beta} \| \chi^{(\beta)}\|_{L^1} \| \varphi^{(\alpha-\beta)}\|_{L^1} \\
&\leq \| \chi \|_{\mathcal{D}_{L^1,\omega,n}}\| \varphi \|_{\mathcal{D}_{L^1,\omega,n}} \exp \left(\log(\sqrt{d}/\pi)\lceil y \rceil + \frac{1}{n} \psi^\ast(n\lceil y \rceil)\right) \\
&\leq (\sqrt{d}/\pi)C' \| \chi \|_{\mathcal{D}_{L^1,\omega,n}}\| \varphi \|_{\mathcal{D}_{L^1,\omega,n}} \exp \left(\frac{1}{k}\psi^\ast(ky)\right).
\end{align*}
Hence,
\begin{align*}
\|V_{\overline{\chi}}\varphi(\, \cdot \,,-\xi)\|_{L^1} &\leq (\sqrt{d}/\pi)C' \| \chi \|_{\mathcal{D}_{L^1,\omega,n}}\| \varphi \|_{\mathcal{D}_{L^1,\omega,n}} \inf_{y \geq 0}\exp \left(\frac{1}{k}\psi^\ast(ky) - \log(|\xi|)y\right) \\
& = (\sqrt{d}/\pi)C' \| \chi \|_{\mathcal{D}_{L^1,\omega,n}}\| \varphi \|_{\mathcal{D}_{L^1,\omega,n}}  \exp \left(-\frac{1}{k}\omega(|\xi|)\right). 
\end{align*}
For $|\xi| \leq 1$ it holds that
$$
\|V_{\overline{\chi}}\varphi(\, \cdot \,,-\xi)\|_{L^1}  \leq \|\chi\|_{L^1} \|\varphi\|_{L^1} \leq \|\chi\|_{L^1} \exp\left (\frac{1}{k} \omega(1) \right) \|\varphi \|_{\mathcal{D}_{L^1,\omega,n}} \exp\left ( -\frac{1}{k} \omega(|\xi|) \right).
$$
Finally, in view of Lemma \ref{dense-2}, \eqref{complemented-2} follows directly from \eqref{complemented-1}.
\end{proof}
We end this section by using Proposition \ref{STFT-bounded} to show two important properties of $\mathcal{B}'_{\{\omega\}}(\R^d)$.
\begin{theorem}\label{DN-bar}
The Fr\'echet space $\mathcal{B}'_{\{\omega\}}(\R^d)$ satisfies $(\underline{DN})$.
\end{theorem}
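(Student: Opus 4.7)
The plan is to exploit Proposition \ref{STFT-bounded} to realize $\mathcal{B}'_{\{\omega\}}(\R^d)$ as a (complemented, hence topological) subspace of the weighted continuous-function Fr\'echet space $C_{b,\{\omega\}}(\R^{2d})$ via the short-time Fourier transform. Since $(\underline{DN})$ is inherited by topological subspaces of Fr\'echet spaces, it then suffices to verify $(\underline{DN})$ for $C_{b,\{\omega\}}(\R^{2d})$ itself, which I would reduce to a purely pointwise interpolation argument with the weights $\exp(-\frac{1}{n}\omega(|\xi|))$.

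For the interpolation on $C_{b,\{\omega\}}(\R^{2d})$, I would take $n = 1$ in the fundamental increasing sequence of seminorms $(\| \cdot \|_{C_{b,\omega,n}})_{n \in \Z_+}$. Given any $m \geq 1$, I would first fix some $\theta \in (0, 1/m)$ and then choose $k \geq m$ large enough so that $\theta + (1-\theta)/k \leq 1/m$. At each point $(x,\xi) \in \R^{2d}$, factoring $|F(x,\xi)| = |F(x,\xi)|^\theta |F(x,\xi)|^{1-\theta}$ and regrouping the exponential weights yields
\begin{equation*}
|F(x,\xi)|\exp\!\Bigl(-\tfrac{1}{m}\omega(|\xi|)\Bigr) \leq \|F\|_{C_{b,\omega,1}}^{\theta}\|F\|_{C_{b,\omega,k}}^{1-\theta}\exp\!\Bigl(\bigl(\theta + \tfrac{1-\theta}{k} - \tfrac{1}{m}\bigr)\omega(|\xi|)\Bigr),
\end{equation*}
and the exponential factor is at most $1$ by the choice of $\theta$ and $k$. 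Taking the supremum over $(x,\xi)$ then gives the $(\underline{DN})$-estimate with constant $C = 1$.

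To transfer this back, I would fix a nonzero $\chi \in \mathcal{S}^{(1)}_{(1)}(\R^d)$ together with a synthesis window $\gamma \in \mathcal{S}^{(1)}_{(1)}(\R^d)$ for $\chi$. Proposition \ref{STFT-bounded}, combined with the reconstruction identity \eqref{complemented-2}, shows that $V_\chi: \mathcal{B}'_{\{\omega\}}(\R^d) \to C_{b,\{\omega\}}(\R^{2d})$ is a topological embedding, since it has the continuous left inverse $(\gamma,\chi)_{L^2}^{-1} V^\ast_\gamma$. Pulling back the seminorms of $C_{b,\{\omega\}}(\R^{2d})$ through $V_\chi$ produces a fundamental increasing sequence of seminorms on $\mathcal{B}'_{\{\omega\}}(\R^d)$ for which the $(\underline{DN})$-inequality is inherited verbatim from the one just established, and this suffices by the standard fact that $(\underline{DN})$ is independent of the chosen fundamental sequence of seminorms. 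I do not foresee a serious obstacle here: the analytic heavy lifting has already been carried out in Proposition \ref{STFT-bounded}, and the remaining interpolation step is elementary.
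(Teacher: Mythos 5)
Your proposal is correct and follows exactly the paper's route: realize $\mathcal{B}'_{\{\omega\}}(\R^d)$ as a complemented (hence closed) subspace of $C_{b,\{\omega\}}(\R^{2d})$ via $V_\chi$ using Proposition \ref{STFT-bounded}, note that $(\underline{DN})$ passes to subspaces, and verify $(\underline{DN})$ for $C_{b,\{\omega\}}(\R^{2d})$. The only difference is cosmetic: the paper dismisses the last step as ``a standard argument,'' whereas you spell out the pointwise factorization $|F|=|F|^{\theta}|F|^{1-\theta}$ with the condition $\theta+(1-\theta)/k\leq 1/m$, which is exactly the intended computation.
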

\begin{proof}
A standard argument shows that  $C_{b,\{\omega\}}(\R^{2d})$ satisfies $(\underline{DN})$. Proposition \ref{STFT-bounded} implies that $\mathcal{B}'_{\{\omega\}}(\R^d)$ is isomorphic to a closed (in fact, complementend) subspace of $C_{b,\{\omega\}}(\R^{2d})$. The result now follows from the fact that $(\underline{DN})$ is inherited by closed subspaces.
\end{proof}
\begin{theorem}\label{roum-beur}
For every $f \in \mathcal{B}'_{\{\omega\}}(\R^d)$ there is a weight function $\sigma$ with $\sigma(t) = o(\omega(t))$ such that $f  \in \mathcal{B}'_{\{\sigma\}}(\R^d)$, that is, there is $g \in \mathcal{B}'_{\{\sigma\}}(\R^d)$ such that $g_{|\mathcal{D}_{L^1,\{\omega\}}(\R^d)} = f$.
\end{theorem}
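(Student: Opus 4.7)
The plan is to use Proposition~\ref{STFT-bounded} to convert membership in $\mathcal{B}'_{\{\omega\}}(\R^d)$ into a growth condition on a continuous function over $\R^{2d}$, construct a smaller governing weight there, and transport back via the adjoint STFT. Fix a nonzero $\chi \in \mathcal{S}^{(1)}_{(1)}(\R^d)$; since $(\chi,\chi)_{L^2} = \|\chi\|^2_{L^2} \neq 0$, the window $\chi$ is a synthesis window for itself. Put $F := V_\chi f$. By Proposition~\ref{STFT-bounded}, $F \in C_{b,\{\omega\}}(\R^{2d})$, i.e.\ for each $n \in \Z_+$ there is $C_n > 0$ with $|F(x,\xi)| \leq C_n \exp(\omega(|\xi|)/n)$ for all $(x,\xi) \in \R^{2d}$. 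Equivalently, the nondecreasing function $H(r) := \log_+ \sup_{x \in \R^d,\, |\xi| \leq r} |F(x,\xi)|$ satisfies $H(r) = o(\omega(r))$ as $r \to \infty$.

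The main technical obstacle is to construct a weight function $\sigma$ with $\sigma(t) = o(\omega(t))$ and $H(r) = o(\sigma(r))$. Asymptotically there is no difficulty: setting $\varepsilon(r) := \sup_{s \geq r} H(s)/\omega(s)$ (nonincreasing, tending to zero), the candidate $\tilde\sigma(r) := \varepsilon(r)^{1/2}\omega(r)$ satisfies $\tilde\sigma(r)/\omega(r) = \varepsilon(r)^{1/2} \to 0$ and $H(r)/\tilde\sigma(r) \leq \varepsilon(r)^{1/2} \to 0$. The work lies in replacing $\tilde\sigma$ by a genuine weight function in the Braun--Meise--Taylor sense -- an increasing continuous function $\sigma$ on $[0,\infty)$, vanishing on $[0,1]$, with $\psi_\sigma(x) := \sigma(e^x)$ convex, satisfying doubling $(\alpha)$ and $\log t = o(\sigma(t))$ -- without destroying the two asymptotic bounds. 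Conditions $(\beta)$ and $(\gamma)$ are harmless (the former is automatic from $\sigma = O(\omega) = O(t)$, the latter can be ensured by inserting a minimum with a weight function comfortably below $\omega$ such as $t \mapsto \omega(t)^{1/2}$), while $(\alpha)$ and $(\delta)$ are arranged by an interleaved doubling/convexification procedure on the logarithmic scale applied to $\tilde\sigma$. This is the only delicate step of the argument.

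Once $\sigma$ is available, the bound $H(r) = o(\sigma(r))$ gives, for each $n$, a constant $C'_n > 0$ with $|F(x,\xi)| \leq C'_n \exp(\sigma(|\xi|)/n)$, so that $F \in C_{b,\{\sigma\}}(\R^{2d})$. Applying Proposition~\ref{STFT-bounded} to the weight $\sigma$ now yields $g := \|\chi\|^{-2}_{L^2} V^\ast_\chi F \in \mathcal{B}'_{\{\sigma\}}(\R^d)$. Both $\mathcal{B}'_{\{\omega\}}(\R^d)$ and $\mathcal{B}'_{\{\sigma\}}(\R^d)$ embed into $\mathcal{S}'^{(1)}_{(1)}(\R^d)$ by Lemma~\ref{dense-2}, and inside this ambient space the inversion formula \eqref{complemented-1} gives $\|\chi\|^{-2}_{L^2} V^\ast_\chi V_\chi f = f$, so $g = f$ as Fourier ultrahyperfunctions. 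Combining this with the density of $\mathcal{S}^{(1)}_{(1)}(\R^d)$ in $\mathcal{D}_{L^1,\{\omega\}}(\R^d)$ (Lemma~\ref{dense-2}) and the continuous inclusion $\mathcal{D}_{L^1,\{\omega\}}(\R^d) \subseteq \mathcal{D}_{L^1,\{\sigma\}}(\R^d)$ (since $\sigma = O(\omega)$), one concludes $g_{|\mathcal{D}_{L^1,\{\omega\}}(\R^d)} = f$, as required.
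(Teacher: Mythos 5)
Your strategy is identical to the paper's: push $f$ through the STFT to get $F = V_\chi f \in C_{b,\{\omega\}}(\R^{2d})$, find a weight function $\sigma$ with $\sigma = o(\omega)$ that still dominates the growth of $F$, and pull back with $V^\ast$. The STFT bookkeeping (synthesis window, ambient space $\mathcal{S}'^{(1)}_{(1)}(\R^d)$, density arguments) is handled correctly.

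The gap is exactly where you flag it: the construction of $\sigma$. You observe that $\tilde\sigma(r) := \varepsilon(r)^{1/2}\omega(r)$ sits asymptotically between $H$ and $\omega$, and then say that convexity of $\psi_\sigma$, monotonicity, $(\alpha)$, and $(\gamma)$ ``are arranged by an interleaved doubling/convexification procedure on the logarithmic scale,'' calling this ``the only delicate step.'' But that delicate step \emph{is} the proof; your sketch does not show that the regularization can be performed without losing either $\sigma = o(\omega)$ or $H = o(\sigma)$. For instance, $\tilde\sigma$ need not even be monotone (it is the product of a nonincreasing and an increasing factor), and it is not a priori clear that forcing convexity of $x \mapsto \sigma(e^x)$ after an increasing rearrangement does not push $\sigma$ back up to the order of $\omega$, nor that the doubling condition $(\alpha)$ can be grafted on compatibly. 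The paper disposes of this by citing a ready-made result, \cite[Lemma~1.7 and Remark~1.8(1)]{Braun}, which produces precisely a weight function $\sigma$ with $\sigma(t) = o(\omega(t))$ and $h(t) = o(\sigma(t))$ given $h = o(\omega)$. Without either that citation or a genuine carrying-out of the regularization, the argument is incomplete. Everything else in your write-up matches the paper's proof (your $H(r) = \sup_{|\xi|\le r}$ versus the paper's $h(t)$ on $|\xi| = t$ is immaterial, and taking $\gamma = \chi$ is a harmless specialization).
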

\begin{proof}
Let $\chi, \gamma \in  \mathcal{S}^{(1)}_{(1)}(\R^d)$ be such that $(\gamma, \chi)_{L^2} = 1$. Proposition \ref{STFT-bounded} yields that $F = V_\chi f \in C_{b,\{\omega\}}(\R^{2d})$. Hence, the function 
$$
h: [0, \infty) \rightarrow [0,\infty), \quad h(t) = \sup \{ \log_+ |F(x,\xi)| \, | \, (x,\xi) \in \R^{2d}, |\xi| = t\}
$$
satisfies $h(t) = o(\omega(t))$. By \cite[Lemma 1.7 and Remark 1.8(1)]{Braun} there is a weight function $\sigma$ satisfying $\sigma(t) = o(\omega(t))$ and $h(t) = o(\sigma(t))$. The latter implies that $F \in  C_{b,\{\sigma\}}(\R^{2d})$. Therefore, another application of Proposition \ref{STFT-bounded} shows that $g = V^\ast_\gamma F \in  \mathcal{B}'_{\{\sigma\}}(\R^d)$. Finally,  by \eqref{complemented-2}, we have that $g_{|\mathcal{D}_{L^1,\{\omega\}}(\R^d)} = f$.
\end{proof}

\section{Revisiting H\"ormander's support theorem}\label{sect-support}
Theorem \ref{support} implies that $\mathcal{E}'_{\{\omega\}}(\R^d) \cap \mathcal{A}'[K] = \mathcal{E}'_{\{\omega\}}[K]$ (in $\mathcal{A}'(\R^d)$) for all $K \subsetc \R^d$. The goal of this section is to show the following generalization of this statement.
\begin{theorem}\label{support-2} Let $K \subsetc \R^d$. Then, 
$\mathcal{B}'_{\{\omega\}}(\R^d) \cap \mathcal{A}'[K] = \mathcal{E}'_{\{\omega\}}[K]$ (in $\mathcal{B}'_{\{\omega_1\}}(\R^d)$).
\end{theorem}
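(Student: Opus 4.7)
The inclusion $\mathcal{E}'_{\{\omega\}}[K]\subseteq\mathcal{B}'_{\{\omega\}}(\R^d)\cap\mathcal{A}'[K]$ is immediate: Lemma \ref{dense-2} yields $\mathcal{E}'_{\{\omega\}}(\R^d)\subseteq\mathcal{B}'_{\{\omega\}}(\R^d)$, while Theorem \ref{support} identifies the smallest $\{\omega\}$-carrier of any $f\in\mathcal{E}'_{\{\omega\}}[K]$ with $\operatorname{supp}_{\mathcal{A}'}f$, forcing $f\in\mathcal{A}'[K]$.

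For the reverse inclusion, the plan is to reduce to Theorem \ref{support} by passing to a strictly smaller weight via Theorem \ref{roum-beur}. Take $f\in\mathcal{B}'_{\{\omega\}}(\R^d)\cap\mathcal{A}'[K]$ and, using Theorem \ref{roum-beur}, select a weight $\sigma$ with $\sigma(t)=o(\omega(t))$ and $f\in\mathcal{B}'_{\{\sigma\}}(\R^d)$. Since $\sigma=o(\omega)$ implies $\mathcal{E}'_{\{\sigma\}}[K]\subseteq\mathcal{E}'_{\{\omega\}}[K]$ and Theorem \ref{support} applied to the weight $\sigma$ gives $\mathcal{E}'_{\{\sigma\}}(\R^d)\cap\mathcal{A}'[K]=\mathcal{E}'_{\{\sigma\}}[K]$, the task reduces to proving $f\in\mathcal{E}'_{\{\sigma\}}(\R^d)$.

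For this last step, I would use the STFT representation of Proposition \ref{STFT-bounded}. Fix $\chi,\gamma\in\mathcal{S}^{(1)}_{(1)}(\R^d)$ with $(\gamma,\chi)_{L^2}=1$ and set $F=V_\chi f\in C_{b,\{\sigma\}}(\R^{2d})$, which yields the global bound $|F(x,\xi)|\leq C\exp(\sigma(|\xi|)/n_1)$ for some $n_1\in\N$. Since $\overline{M_\xi T_x\chi}\in\mathcal{A}(\R^d)$ and the exponential-decay estimate $|\chi^{(\alpha)}(t)|\leq C_m\alpha!\,m^{-|\alpha|}e^{-m|t|}$ holds for every $m\in\N$, the hypothesis $f\in\mathcal{A}'[K]$ should furnish the complementary bound
\begin{equation*}
|F(x,\xi)|\leq C_m\exp(2\pi m\sqrt{d}\,|\xi|)\exp(-m\,d(x,L)),\qquad m\in\N,
\end{equation*}
where $L\subsetc\R^d$ is a compact neighborhood of $K$ on whose interior $f$ acts continuously as an analytic functional. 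Using the inversion $f=V^*_\gamma F$ and the integral representation $f(\varphi)=\iint F(x,\xi)V_{\overline\chi}\varphi(x,-\xi)\,\dx\,\dxi$ for $\varphi\in\mathcal{D}_{L^1,\{\sigma\}}(\R^d)$ (as in the proof of Proposition \ref{STFT-bounded}), one would split the $x$-integral over a bounded open neighborhood $\Omega\supset L$ and its complement. On $\Omega^c$ the second bound on $F$, coupled with the estimate $\|V_{\overline\chi}\varphi(\cdot,-\xi)\|_{L^1}\leq C'\|\varphi\|_{\mathcal{D}_{L^1,\sigma,m'}}\exp(-\sigma(|\xi|)/k)$ from the proof of Proposition \ref{STFT-bounded}, controls the contribution once $m$ is chosen so that the $\sigma$-decay dominates the factor $\exp(2\pi m\sqrt{d}\,|\xi|)$ throughout the integration range; on $\Omega$ the global bound on $F$ together with the rapid $\xi$-decay of $V_{\overline\chi}\varphi$ should localize the estimate to a compact $L'\supset\overline\Omega$. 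The outcome is a bound $|f(\varphi)|\leq C\|\varphi\|_{\mathcal{E}_{\sigma,n'}(L')}$ on the dense subspace $\mathcal{D}_{L^1,\{\sigma\}}(\R^d)$ of $\mathcal{E}_{\{\sigma\}}(\R^d)$, establishing $f\in\mathcal{E}'_{\{\sigma\}}(\R^d)$.

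The main obstacle is this last step: extracting a local $\mathcal{E}_{\{\sigma\}}$-seminorm bound from an integral representation of $f$ that a priori involves $\varphi$ globally. The delicate balance between the $\xi$-growth $\exp(2\pi m\sqrt{d}\,|\xi|)$ from the analytic-support estimate and the $\sigma$-decay of the STFT of $\mathcal{D}_{L^1,\{\sigma\}}$-functions is precisely what makes the reduction to the strictly smaller weight $\sigma$ via Theorem \ref{roum-beur} essential: without this passage there is no room in the weighted decay to absorb the $\xi$-growth coming from $\mathcal{A}'[K]$.
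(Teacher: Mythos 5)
Your reduction scheme is different from the paper's and is sound in outline: pass to $\sigma=o(\omega)$ via Theorem~\ref{roum-beur}, then invoke Theorem~\ref{support} for $\sigma$ to get from $\mathcal{E}'_{\{\sigma\}}(\R^d)\cap\mathcal{A}'[K]$ to $\mathcal{E}'_{\{\sigma\}}[K]\subseteq\mathcal{E}'_{\{\omega\}}[K]$. The entire burden is thus shifted to the single claim $f\in\mathcal{E}'_{\{\sigma\}}(\R^d)$, and that claim is where your argument has a genuine gap. To see it, note that the $\xi$-growth in any bound on $F=V_\chi f$ extracted from $f\in\mathcal{A}'[K]$ (or even from $f\in\mathcal{A}'(\Omega_0)$) is at best $\exp(c|\xi|/n)$ for some absolute $c>0$, since the analytic seminorms $\|\,\cdot\,\|_{\mathcal{E}_{\omega_1,n}(L)}$ of the windows $\overline{M_\xi T_x\chi}$ necessarily pick up factors $(2\pi|\xi|)^{|\beta|}$ from differentiating the modulation. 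On the other hand, the STFT-side decay $\|V_{\overline\chi}\varphi(\,\cdot\,,-\xi)\|_{L^1}\lesssim\exp(-\sigma(|\xi|)/k)$ is governed by $\sigma$, and $\sigma(t)=o(\omega(t))=o(t)$ by condition $(\beta)$, so $\sigma$ is strictly sublinear. Hence $\exp(c|\xi|/n-\sigma(|\xi|)/k)\to\infty$ for every choice of $n,k$, and the integral over $\Omega^c\times\R^d$ that you propose to control simply diverges. There is no choice of the parameter $m$ (nor $n$) that repairs this, because the growth/decay mismatch is structural: the $\mathcal{A}'$-estimate is linear in $|\xi|$, the $\sigma$-estimate is sublinear, and passing to a yet smaller weight only worsens the imbalance. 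A pointwise interpolation $|F|\leq(\text{bound}_1)^\theta(\text{bound}_2)^{1-\theta}$ does not help either, for the same reason.

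The paper circumvents exactly this obstacle by working on the test-function side instead of the functional side: using a dyadic partition $(\theta_j)$ of frequency space and H\"ormander's analytic cut-offs $(\chi_{p_j})$, it splits each $\varphi\in\mathcal{E}_{\{\omega\}}(\Omega)$ as $R(\varphi)+T(\varphi)$ with $R(\varphi)\in\mathcal{D}_{L^1,\{\sigma\}}(\R^d)$ (paired against the bounded-infrahyperfunction extension of $f$) and $T(\varphi)$ holomorphic near $K$ (paired against the $\mathcal{A}'[K]$ extension). The smaller weight $\sigma$ from Theorem~\ref{roum-beur} enters there for a different technical reason than in your sketch: the series defining $R(\varphi)$ converges only in a Beurling-type space $\mathcal{S}_{\sigma,k}(\R^d)$ with fixed $k$, and the embedding $\mathcal{E}_{\{\omega\}}(\overline{\Omega''})\subset\mathcal{E}_{\sigma,1}(\overline{\Omega''})$ uses $\sigma=o(\omega)$. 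Your STFT-based approach would need some analogous way to separate, on the time-frequency plane, the region where the $\mathcal{B}'_{\{\sigma\}}$-estimate applies from the region where the $\mathcal{A}'[K]$-estimate applies, and I do not see a way to do that without re-importing the Littlewood--Paley machinery; as written the argument breaks at the $\Omega^c$ integral.
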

We closely follow H\"ormander's technique \cite{Hormander-1985} to show Theorem \ref{support-2}. In fact, we only have to improve the first part of the proof of \cite[Thm.\ 3.4]{Hormander-1985}  (cf.\ \cite[Lemma 4.9]{Heinrich}), but we repeat the whole argument for the sake of completeness.  We need some preparation.

Let $\Omega \subseteq \R^d$ be open. A bounded sequence $(\chi_p)_{p\in \N} \subset \mathcal{D}(\Omega)$ is called an \emph{analytic cut-off sequence supported in $\Omega$} if there is $L > 0$ such that for all $p \in \N$
$$
\| \chi_p^{(\alpha)} \|_{L^\infty} \leq (Lp)^{|\alpha|}, \qquad \ |\alpha| \leq p.
$$
\cite[Thm.\ 1.4.2]{Hormander} implies that, for all $\Omega' \Subset \Omega$, there is an analytic cut-off sequence $(\chi_p)_{p\in \N}$ supported in $\Omega$ such that $\chi_p \equiv 1$ on $\Omega'$ for all $p \in \mathbb{N}$. 

We fix the constants in the Fourier transform as follows
$$
\mathcal{F}(\varphi)(\xi) = \widehat{\varphi}(\xi) := \int_{\R^d} \varphi(x) e^{-i\xi x} \dx, \qquad \varphi \in L^1(\R^d).
$$

\begin{lemma} \label{lemma-1}
Let $\Omega \Subset \R^d$ and let $(\chi_p)_{p \in \N}$ be an analytic cut-off sequence supported in $\Omega$. For all $n \in \Z_+$ there are $k \in \Z_+$ and $C, M >0$ such that
$$
\sup_{\xi \in \R^d}|\xi|^p |\mathcal{F}(x^\beta \chi_p(x) \varphi(x))(\xi)| \leq CM^{|\beta|}\|\varphi\|_{\mathcal{E}_{\omega,n}(\overline{\Omega})} \exp \left(\frac{1}{k}\psi^\ast(kp)\right), \qquad 
$$
for all $\varphi \in \mathcal{E}_{\omega,n}(\overline{\Omega})$, $p \in \N$ and $\beta \in \N^d$.
\end{lemma}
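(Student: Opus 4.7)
The plan is to convert the weighted $L^\infty$ estimate on the Fourier transform into an $L^1$ (actually $L^\infty$ on $\overline{\Omega}$) estimate on a derivative via the identity $\xi^\alpha\mathcal{F}(g)(\xi)=(-i)^{|\alpha|}\mathcal{F}(\partial^\alpha g)(\xi)$. Combining this with the elementary bound $|\xi|^p \leq d^{p/2}\max_{|\alpha|=p}|\xi^\alpha|$ and using that $x^\beta \chi_p\varphi$ is supported in $\overline{\Omega}$, I would start from
\begin{equation*}
\sup_{\xi\in\R^d}|\xi|^p\,|\mathcal{F}(x^\beta\chi_p\varphi)(\xi)| \leq d^{p/2}|\Omega|\max_{|\alpha|=p}\bigl\|\partial^\alpha(x^\beta\chi_p\varphi)\bigr\|_{L^\infty(\overline{\Omega})}.
\end{equation*}

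Next I would expand $\partial^\alpha(x^\beta\chi_p\varphi)$ by Leibniz as a triple sum over $\gamma+\delta+\epsilon=\alpha$, and estimate each factor pointwise on $\overline{\Omega}$: with $R:=1+\sup_{x\in\Omega}|x|$ one has $|\partial^\gamma(x^\beta)|\leq \frac{\beta!}{(\beta-\gamma)!}R^{|\beta|-|\gamma|}$ (and $0$ unless $\gamma\leq\beta$), while $\|\chi_p^{(\delta)}\|_{L^\infty}\leq (Lp)^{|\delta|}$ (valid precisely because $|\delta|\leq|\alpha|=p$, matching the range in which the analytic cut-off bound holds), and $|\varphi^{(\epsilon)}|\leq \|\varphi\|_{\mathcal{E}_{\omega,n}(\overline{\Omega})}\exp(\psi^*(n|\epsilon|)/n)$. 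The key point is to peel off the $\beta$-dependence: by the Vandermonde estimate $\binom{\beta}{\gamma}\leq \binom{|\beta|}{|\gamma|}\leq 2^{|\beta|}$ one has
\begin{equation*}
\frac{\beta!}{(\beta-\gamma)!}R^{|\beta|-|\gamma|}=\binom{\beta}{\gamma}|\gamma|!\,R^{|\beta|-|\gamma|}\leq |\gamma|!\,(2R)^{|\beta|},
\end{equation*}
which factors out $M^{|\beta|}$ with $M:=2R$ and leaves only a $p$-dependent remainder.

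Bounding the trinomial coefficient by $\alpha!/(\gamma!\delta!\epsilon!)\leq 3^{|\alpha|}=3^p$ and the number of terms in the Leibniz sum by a polynomial in $p$, the proof reduces to showing that
\begin{equation*}
(3\sqrt{d})^p\,|\Omega|\max_{g+e+h=p}g!\,(Lp)^{e}\exp\bigl(\psi^*(nh)/n\bigr) \leq C\exp\bigl(\psi^*(kp)/k\bigr)
\end{equation*}
for some $k\in\Z_+$ and $C>0$ independent of $p$, $\beta$, and $\varphi$, where $g,e,h\in\{0,\ldots,p\}$.

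The main obstacle is collecting the sub-dominant factors $g!\leq p!$, $(Lp)^e\leq (Lp)^p$, and the constant-base-to-the-$p$ terms into the single exponential $\exp(\psi^*(kp)/k)$. I would do this in two stages. First, since $\omega(t)=O(t)$ by condition $(\beta)$, one has $\omega_1(t)\leq c\omega(ct)+c'$ for suitable constants, which at the level of Young conjugates gives $\psi_1^*(y)\leq \psi^*(ay)/a+O(1)$ for some $a\in\Z_+$; this absorbs $\log(p!)=O(\psi_1^*(p))$ and $\log((Lp)^p)=O(\psi_1^*(p))+O(p)$ into $\psi^*(ap)/a$ up to a linear-in-$p$ remainder. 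Second, the linear-in-$p$ remainder and the $\psi^*(np)/n$ contribution are merged into one $\psi^*(kp)/k$ via (a single) application of Lemma \ref{M12}. Choosing $k$ large enough to accommodate the largest constant produced along the way then yields the claimed estimate.
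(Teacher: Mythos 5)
Your proposal is a valid route to the lemma, but it proceeds differently from the paper. The paper first settles the case $\beta=0$ by a single Leibniz expansion and then, crucially, recognizes that the inner sum
$\sum_{\beta\leq\alpha}\binom{\alpha}{\beta}\exp\bigl(\log(Lp)|\beta|-\tfrac{1}{n}\psi^*(n|\beta|)\bigr)$
is $\leq 2^p\exp\bigl(\tfrac{1}{n}\sup_{y\geq 0}(\log(Lp)\,ny-\psi^*(ny))\bigr)=2^p\exp\bigl(\tfrac{1}{n}\omega(Lp)\bigr)$, since $(\psi^*)^*=\psi$ and $\psi(\log(Lp))=\omega(Lp)$. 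Condition $(\beta)$ then gives $\omega(Lp)=O(p)$ and Lemma~\ref{M12} finishes. The general $\beta$ is then reduced to $\beta=0$ via the algebra estimate $\|\varphi_1\varphi_2\|_{\mathcal{E}_{\omega,m}}\leq C'\|\varphi_1\|_{\mathcal{E}_{\omega,n}}\|\varphi_2\|_{\mathcal{E}_{\omega,n}}$ applied to $\theta=x^\beta\varphi$, together with $\|x^\beta\|_{\mathcal{E}_{\omega,n}(\overline{\Omega})}\leq M^{|\beta|}$. Your version instead treats all of $x^\beta$, $\chi_p$ and $\varphi$ at once via a triple Leibniz sum, peels off the $\beta$-dependence by Vandermonde, and absorbs the surviving $p!$ and $(Lp)^p$ by comparing $\psi_1^*$ to $\psi^*$. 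What the paper's argument buys is avoiding the detour through $\psi_1^*$ altogether: $\omega(Lp)$ appears directly as a Legendre transform, and $(\beta)$ is used exactly once. Your argument is slightly more pedestrian but just as legitimate, provided the $\psi_1^*$-versus-$\psi^*$ comparison is stated correctly.

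And that is where your writeup has a genuine slip. You assert that condition $(\beta)$ gives $\omega_1(t)\leq c\,\omega(ct)+c'$. This is \emph{false} for general quasianalytic weights (take $\omega(t)=t/\log(e+t)$, which grows strictly slower than $\omega_1$), and indeed $(\beta)$ gives exactly the \emph{opposite} pointwise inequality $\omega(t)\leq c\,\omega_1(t)+c'$. The correct derivation of what you actually need proceeds through the reversal of inequalities under Young conjugation: from $\psi(x)=\omega(e^x)\leq c\,\psi_1(x)+c''$ one gets
$$
\psi^*(y)\geq\sup_{x\geq 0}\bigl(xy-c\,\psi_1(x)\bigr)-c''=c\,\psi_1^*(y/c)-c'',
$$
and hence $\psi_1^*(y)\leq\tfrac{1}{a}\psi^*(ay)+O(1)$ for an integer $a\geq c$, which is precisely the estimate you invoke to swallow $\log(p!)$ and $p\log(Lp)$. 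So your conclusion is right but your stated intermediate step is backwards; as written, the double sign error happens to land on the right inequality but would not survive scrutiny. With that step corrected (and the harmless typo $\beta!/(\beta-\gamma)!=\binom{\beta}{\gamma}\gamma!\leq\binom{\beta}{\gamma}|\gamma|!$ noted), the proof is sound.
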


\begin{proof}
We first prove the case $\beta = 0$. It holds that
\begin{align*}
\sup_{\xi \in \R^d}|\xi|^p |\widehat{\chi_p \varphi}(\xi)| &\leq  |\Omega|  {\sqrt{d}}^{\,p}\max_{|\alpha| = p} \sum_{\beta \leq \alpha} \binom{\alpha}{\beta} \| \chi_p^{(\beta)} \varphi^{(\alpha-\beta)}\|_{L^\infty} \\
&\leq  |\Omega|\|\varphi\|_{\mathcal{E}_{\omega,n}(\overline{\Omega})}{\sqrt{d}}^{\,p}\ \max_{|\alpha| = p} \sum_{\beta \leq \alpha} \binom{\alpha}{\beta} (Lp)^{|\beta|}\exp \left(\frac{1}{n}\psi^\ast(n|\alpha-\beta|)\right) \\
&\leq |\Omega|\|\varphi\|_{\mathcal{E}_{\omega,n}(\overline{\Omega})}   \exp \left( \log(\sqrt{d})p + \frac{1}{n}\psi^\ast(np)\right) \times  \\
&\phantom{\leq}\max_{|\alpha| = p} \sum_{\beta \leq \alpha} \binom{\alpha}{\beta} \exp \left( \log(Lp)|\beta| - \frac{1}{n}\psi^\ast(n|\beta|)\right) \\
&\leq  |\Omega|\|\varphi\|_{\mathcal{E}_{\omega,n}(\overline{\Omega})} \exp \left( \log(2\sqrt{d})p + \frac{1}{n}\omega(Lp) + \frac{1}{n}\psi^\ast(np)\right)
\end{align*}
for all $\varphi \in \mathcal{E}_{\omega,n}(\overline{\Omega})$ and  $p \in \N$.
The result now follows from condition $(\beta)$ and Lemma \ref{M12}. Next, we consider the general case $\beta \in \N^d$. Let $n \in \Z_+$ be arbitrary. Lemma $\ref{M12}$ implies that there are $m \in \Z_+$ and $C' > 0$ such that
$$
\| \varphi_1\varphi_2\|_{\mathcal{E}_{\omega,m}(\overline{\Omega})} \leq C' \| \varphi_1\|_{\mathcal{E}_{\omega,n}(\overline{\Omega})}\| \varphi_2\|_{\mathcal{E}_{\omega,n}(\overline{\Omega})}
$$
for all $\varphi_1,\varphi_2 \in \mathcal{E}_{\omega,n}(\overline{\Omega})$. By the case $\beta = 0$, there are $k \in \Z_+$ and $C'' > 0$ such that
$$
\sup_{\xi \in \R^d}|\xi|^p |\widehat{\chi_p \theta}(\xi)| \leq C''\|\theta\|_{\mathcal{E}_{\omega,m}(\overline{\Omega})} \exp \left(\frac{1}{k}\psi^\ast(kp)\right)
$$
for all $\theta \in \mathcal{E}_{\omega,m}(\overline{\Omega})$ and $p \in \N$. Hence,
\begin{align*}
\sup_{\xi \in \R^d}|\xi|^p |\mathcal{F}(x^\beta \chi_p(x) \varphi(x))(\xi)| &\leq C''\|x^\beta \varphi(x)\|_{\mathcal{E}_{\omega,m}(\overline{\Omega})} \exp \left(\frac{1}{k}\psi^\ast(kp)\right) \\
&\leq C'C''\|x^\beta\|_{\mathcal{E}_{\omega,n}(\overline{\Omega})} \|\varphi\|_{\mathcal{E}_{\omega,n}(\overline{\Omega})} \exp \left(\frac{1}{k}\psi^\ast(kp)\right) 
\end{align*}
for all $\varphi \in \mathcal{E}_{\omega,n}(\overline{\Omega})$, $p \in \N$ and $\beta \in \N^d$. The result now follows from the fact that there is $M > 0$ such that $\|x^\beta\|_{\mathcal{E}_{\omega,n}(\overline{\Omega})} \leq M^{|\beta|}$ for all $\beta \in \N^d$.
\end{proof}
For $n \in \Z_+$ we define  $\mathcal{S}_{\omega,n}(\R^d)$ as the Fr\'echet space consisting of all $\varphi \in C^\infty(\R^d)$ such that
$$
\|\varphi \|_{\mathcal{S}^{m}_{\omega,n}} := \max_{|\beta| \leq m} \sup_{\alpha \in \N^d} \sup_{x \in \R^d} |x^\beta \varphi^{(\alpha)}(x)| \exp \left(-\frac{1}{n}\psi^\ast(n|\alpha|)\right) < \infty, \qquad \forall m \in \N,
$$
and
$\mathcal{S}^{\omega,n}(\R^d)$ as the Fr\'echet space consisting of all $\varphi \in C^\infty(\R^d)$ such that
$$
\|\varphi \|_{\mathcal{S}_m^{\omega,n}} := \max_{|\beta| \leq m} \sup_{\alpha \in \N^d} \sup_{\xi \in \R^d} |\xi^\alpha \varphi^{(\beta)}(\xi)| \exp \left(-\frac{1}{n}\psi^\ast(n|\alpha|)\right) < \infty, \qquad \forall m \in \N.
$$
\begin{lemma}\label{improv} Let $\Omega \Subset \R^d$, let $(\chi_p)_{p \in \N}$ be an analytic cut-off sequence supported in $\Omega$ and let $(\theta_j)_{j \in \N} \subset \mathcal{D}(\R^d)$ be such that
\begin{itemize}
\item[$(i)$] $\supp \theta_0 \subseteq \overline{B}(0,2)$ and $\supp \theta_j \subseteq \{ \xi \in \R^{d} \, | \, 2^{j-1} \leq |\xi| \leq 2^{j+1}\}$ for $j \in \Z_+$.
\item[$(ii)$] $\sup_{j \in \N} \max_{|\alpha| \leq m} \|  \theta^{(\alpha)}_j \|_{L^\infty} = R_m < \infty$ for all $m \in \N$. 
\end{itemize}
There are a sequence $(p_j)_{j \in \N}$ of natural numbers and $k \in \Z_+$ such that
$$
\sum_{j=0}^\infty(\chi_{p_j}\varphi) \ast \mathcal{F}^{-1}(\theta_j) \in \mathcal{S}_{\omega,k}(\R^d)
$$
for all $\varphi \in \mathcal{E}_{\omega,1}(\overline{\Omega})$. Moreover, this series converges in $\mathcal{S}_{\omega,k}(\R^d)$ and the mapping 
$$
\mathcal{E}_{\omega,1}(\overline{\Omega}) \rightarrow \mathcal{S}_{\omega,k}(\R^d): \varphi \rightarrow \sum_{j=0}^\infty(\chi_{p_j}\varphi) \ast \mathcal{F}^{-1}(\theta_j)
$$
is continuous.
\end{lemma}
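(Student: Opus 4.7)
The plan is to estimate each summand $\psi_j := (\chi_{p_j}\varphi) \ast \mathcal{F}^{-1}(\theta_j)$ in the seminorms of $\mathcal{S}_{\omega,k}(\R^d)$ and then to choose $(p_j)_{j\in\N}$ (growing with $j$) so that the resulting series converges absolutely. Since $\widehat{\psi_j} = \widehat{\chi_{p_j}\varphi}\,\theta_j$ is supported in $\{|\xi|\leq 2^{j+1}\}$, and in $\{|\xi|\geq 2^{j-1}\}$ for $j\geq 1$, I would start from
$$
x^\beta \psi_j^{(\alpha)}(x) \;=\; \frac{i^{|\alpha|+|\beta|}}{(2\pi)^d}\int_{\R^d} e^{i\xi x}\,\partial_\xi^\beta\bigl(\xi^\alpha\widehat{\chi_{p_j}\varphi}(\xi)\theta_j(\xi)\bigr)\dxi,
$$
bound it by the $L^1$-norm of the integrand, and expand via Leibniz. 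The factors $\partial_\xi^\gamma \xi^\alpha$ and $\theta_j^{(\epsilon)}$ are controlled on $\supp\theta_j$ using $|\xi|\leq 2^{j+1}$, the assumption $\|\theta_j^{(\epsilon)}\|_{L^\infty}\leq R_{|\epsilon|}$, and $|\supp\theta_j|=O(2^{jd})$. The main factor $\partial_\xi^\delta\widehat{\chi_{p_j}\varphi} = (-i)^{|\delta|}\mathcal{F}(x^\delta\chi_{p_j}\varphi)$ is controlled by Lemma \ref{lemma-1}; combined with the lower bound $|\xi|\geq 2^{j-1}$ on $\supp\theta_j$ (for $j\geq 1$), this yields, for some $k_0\in\Z_+$ and constants $C,M>0$,
$$
|\mathcal{F}(x^\delta\chi_{p_j}\varphi)(\xi)| \;\leq\; CM^{|\delta|}\|\varphi\|_{\mathcal{E}_{\omega,1}(\overline{\Omega})}\exp\bigl(\tfrac{1}{k_0}\psi^\ast(k_0 p_j)\bigr)\,2^{-(j-1)p_j}.
$$

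Combining these estimates and dividing by $\exp(\tfrac{1}{k}\psi^\ast(k|\alpha|))$, I would take the supremum over $\alpha\in\N^d$ using the Young inequality $\log(L\,2^{j+1})|\alpha| \leq \tfrac{1}{k}\psi^\ast(k|\alpha|) + \tfrac{1}{k}\omega(L\,2^{j+1})$ (with $L$ absorbing all constants from Leibniz and Lemma \ref{lemma-1}) and the iterate $\omega(L\,2^{j+1})\leq A\omega(2^j)+B$ of condition $(\alpha)$. This produces, for $j\geq 1$ and $|\beta|\leq m$,
$$
\|\psi_j\|_{\mathcal{S}^m_{\omega,k}} \;\leq\; C_m\|\varphi\|_{\mathcal{E}_{\omega,1}(\overline{\Omega})}\,2^{jd}\exp\Bigl(\tfrac{A}{k}\omega(2^j)+\tfrac{1}{k_0}\psi^\ast(k_0 p_j)-(j-1)p_j\log 2\Bigr).
$$
By Legendre duality, $\inf_{p\geq 0}[\tfrac{1}{k_0}\psi^\ast(k_0 p)-(j-1)p\log 2] = -\tfrac{1}{k_0}\omega(2^{j-1})$; using Lemma \ref{M12} to absorb the rounding to integer $p_j$, I can select $p_j\in\Z_+$ making $\tfrac{1}{k_0}\psi^\ast(k_0 p_j)-(j-1)p_j\log 2 \leq -\tfrac{1}{k_1}\omega(2^{j-1})+O(1)$ for some $k_1\geq k_0$. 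One more use of $(\alpha)$ in the form $\omega(2^j)\leq L_0\omega(2^{j-1})+c_0$ reduces the exponent to $(\tfrac{AL_0}{k}-\tfrac{1}{k_1})\omega(2^{j-1})+O(1)$.

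Taking $k$ so large that $\tfrac{AL_0}{k}<\tfrac{1}{k_1}$ makes this coefficient strictly negative, and condition $(\gamma)$ ($\log t = o(\omega(t))$) ensures $\omega(2^{j-1})$ grows faster than $jd\log 2$, so the decay dominates the factor $2^{jd}$ and produces $\sum_{j\geq 1}\|\psi_j\|_{\mathcal{S}^m_{\omega,k}}\leq C_m'\|\varphi\|_{\mathcal{E}_{\omega,1}(\overline{\Omega})}$. The $j=0$ term is handled directly: since $\widehat{\psi_0}$ has compact support, $\psi_0\in\mathcal{S}_{\omega,k}(\R^d)$ with a similar (easier) bound. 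Absolute convergence of the series in $\mathcal{S}_{\omega,k}(\R^d)$ and continuity of $\varphi\mapsto\sum_j\psi_j$ then follow at once. The main obstacle is the delicate choice of $(p_j)$: it must be large enough that $2^{-(j-1)p_j}$ beats $\exp(\tfrac{A}{k}\omega(2^j))$, yet small enough to keep $\psi^\ast(k_0 p_j)$ under control. The Legendre duality between $\psi$ and $\psi^\ast$, combined with condition $(\alpha)$ and Lemma \ref{M12} (needed to pass from the real optimizer to an integer), is exactly what makes these two demands simultaneously achievable.
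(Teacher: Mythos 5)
Your proposal follows essentially the same route as the paper: you invoke Lemma \ref{lemma-1}, choose $p_j$ so that the factor $\exp\bigl(\tfrac{1}{n}\psi^\ast(np_j)\bigr)\,2^{-(j-1)p_j}$ is (nearly) minimal, and then exploit conditions $(\alpha)$ and $(\gamma)$ to sum the resulting estimates. The paper carries out precisely this optimization by introducing the auxiliary function $l_n(t) := \sup_{p\in\N} t^p \exp(-\tfrac{1}{n}\psi^\ast(np))$ and setting $p_j := p_n(2^{j-1})$, which is the integer minimizer you describe; the Legendre relation $\log l_n(t)\approx \tfrac{1}{n}\omega(t)$ is exactly your duality step, while Lemma \ref{M12} supplies the ``take the index large enough'' step (your $k$ versus $AL_0/k < 1/k_1$).

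Two points in your sketch are glossed over, and it is instructive to see how the paper circumvents them. First, your formula $x^\beta\psi_j^{(\alpha)}(x) = c\int e^{i\xi x}\partial_\xi^\beta\bigl(\xi^\alpha\widehat{\chi_{p_j}\varphi}\theta_j\bigr)\,\dxi$ produces a Leibniz cross-term $\partial^\gamma_\xi \xi^\alpha$, which contributes an extra factor of order $|\alpha|^{|\gamma|}\le|\alpha|^m$. This polynomial factor in $|\alpha|$ is absorbable (write $|\alpha|^m\le C_m\, e^{|\alpha|}$ and use Lemma \ref{M12} once more to enlarge the index), but you never say so, and it is the reason the paper instead bounds $\widehat{\chi_{p_j}\varphi}\,\theta_j$ in the Fourier-side space $\mathcal{S}^{\omega,l}(\R^d)$, whose seminorms $\sup_{\alpha}\sup_\xi |\xi^\alpha\,\cdot^{(\beta)}(\xi)|\,e^{-\frac{1}{l}\psi^\ast(l|\alpha|)}$ keep the $\xi^\alpha$ powers \emph{outside} the $\partial^\beta_\xi$ derivatives, so no such cross-term ever arises; the final pass to $\mathcal{S}_{\omega,k}(\R^d)$ is then a black-box application of the continuity of $\mathcal{F}^{-1}$. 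Second, your Legendre duality is over real $p\ge 0$, and the rounding to $p_j\in\N$ is nontrivial: one must choose $p_j = \lceil q_j^\ast\rceil$ with $q_j^\ast$ the \emph{real} minimizer of $\tfrac{1}{k_1}\psi^\ast(k_1 q)-(j-1)q\log 2$ for the \emph{larger} index $k_1$ supplied by Lemma \ref{M12}, rather than the minimizer for $k_0$, in order to control $\psi^\ast(k_0(q^\ast+1))$; ``using Lemma \ref{M12} to absorb the rounding'' is the right idea but the indices have to be set up in this particular order. The paper avoids the issue entirely by defining $l_n$ as a supremum over $p\in\N$ from the outset, so that the minimizing $p_n(t)$ is by construction a natural number and no Legendre computation over $\R$ is needed; the only estimate used is the trivial $l_n(t)\ge t^{q}\exp(-\tfrac{1}{n}\psi^\ast(nq))$ for $q=|\alpha|+1$, which is where Lemma \ref{M12} does all the work.
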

\begin{proof}
Lemma  \ref{lemma-1} implies that there are $n \in \Z_+$ and $C,M > 0$ such that, for all $t > 0$,
$$
|\mathcal{F}(x^\beta \chi_p(x) \varphi(x))(\xi)| \leq CM^{|\beta|}\|\varphi\|_{\mathcal{E}_{\omega,1}(\overline{\Omega})} t^{-p}\exp \left(\frac{1}{n}\psi^\ast(np)\right), \qquad  |\xi| \geq t,
$$
for all $\varphi \in \mathcal{E}_{\omega,1}(\overline{\Omega})$, $p \in \N$ and $\beta \in \N^d$. We define  
$$
l_{n}: [0, \infty) \rightarrow [0,\infty), \quad l_{n}(t) = \sup_{p \in \N} t^p\exp \left(-\frac{1}{n}\psi^\ast(np)\right).
$$
$l_{n}$ is increasing and, for each $t \in [0,\infty)$, there is a smallest natural number $p_{n}(t)$ such that
$$
l_{n}(t) = t^{p_{n}(t)}\exp \left(-\frac{1}{{n}}\psi^\ast({n}p_{n}(t))\right).
$$ 
By setting $p = p_{n}(t)$, we obtain that  
$$
|\mathcal{F}(x^\beta \chi_{p_{n}(t)}(x) \varphi(x))(\xi)| \leq CM^{|\beta|}\|\varphi\|_{\mathcal{E}_{\omega,1}(\overline{\Omega})} \frac{1}{l_{n}(|\xi|/4)}, \qquad  t \leq |\xi| \leq 4t,
$$
for all  $\varphi \in \mathcal{E}_{\omega,1}(\overline{\Omega})$, $p \in \N$ and $\beta \in \N^d$. Set $p_0 = 0$ and $p_j = p_{n}(2^{j-1})$ for $j \in \Z_+$. Then,
$$
|\mathcal{F}(x^\beta \chi_{p_0}(x) \varphi(x))(\xi)| \leq CM^{|\beta|}\|\varphi\|_{\mathcal{E}_{\omega,1}(\overline{\Omega})}, \qquad \xi \in \R^d,
$$
and, for $j \in \Z_+$,
$$
|\mathcal{F}(x^\beta \chi_{p_{j}}(x) \varphi(x))(\xi)| \leq CM^{|\beta|}\|\varphi\|_{\mathcal{E}_{\omega,1}(\overline{\Omega})} \frac{1}{l_{n}(|\xi|/4)}, \qquad  2^{j-1} \leq |\xi| \leq 2^{j+1},
$$
for all $\varphi \in \mathcal{E}_{\omega,1}(\overline{\Omega})$ and $\beta \in \N^d$. By Lemma \ref{M12} there are $l \in \Z_+$ and $C' > 0$ such that
$$
\log(4)y +  \frac{1}{n}\psi^\ast(n(y+1)) \leq  \frac{1}{l}\psi^\ast(ly) + \log C', \qquad y \geq 0.
$$
Let $m \in \N$ and $\varphi \in \mathcal{E}_{\omega,1}(\overline{\Omega})$ be arbitrary. We assume that $M \geq 1$. Then,
$$
\| \widehat{\chi_{p_0}\varphi} \theta_0 \|_{\mathcal{S}_{m}^{\omega,l}} \leq  CC'R_m(2M)^m \|\varphi\|_{\mathcal{E}_{\omega,1}(\overline{\Omega})}
$$
and, for all $N_1,N_2 \in \Z_+$ with $N_1 \leq N_2$, 
\begin{align*}
&\left \| \sum_{j=N_1}^{N_2} \widehat{\chi_{p_j}\varphi}\theta_j \right \|_{\mathcal{S}_{m}^{\omega,l}} \\
&\leq \max_{|\beta| \leq m}  \sup_{\alpha \in \N^d} \sup_{\xi \in \R^d} \sum_{j=N_1}^{N_2} \sum_{\gamma \leq \beta} \binom{\beta}{\gamma} |\xi|^{|\alpha|}|\mathcal{F}(x^\gamma \chi_{p_{j}}(x) \varphi(x))(\xi)| |\theta^{(\beta-\gamma)}_j(\xi)| \exp \left(-\frac{1}{l}\psi^\ast(l|\alpha|)\right) \\
& \leq C{M}^{m}\|\varphi\|_{\mathcal{E}_{\omega,1}(\overline{\Omega})}  \max_{|\beta| \leq m} \sup_{\alpha \in \N^d} \sup_{\xi \in \R^d} \sum_{j=N_1}^{N_2} \sum_{\gamma \leq \beta} \binom{\beta}{\gamma}   \frac{|\xi|^{|\alpha|}}{l_{n}(|\xi|/4)}|\theta^{(\beta-\gamma)}_j(\xi)| \exp \left(-\frac{1}{l}\psi^\ast(l|\alpha|)\right) \\
& \leq 4CC'(2M)^{m}\|\varphi\|_{\mathcal{E}_{\omega,1}(\overline{\Omega})}  \max_{|\beta| \leq m} \sup_{\xi \in \R^d} \sum_{j=N_1}^{N_2} \frac{|\theta_j^{(\beta)}(\xi)|}{|\xi|} \\
& \leq \frac{1}{2^{N_1}}16CC'R_m(2M)^{m}\|\varphi\|_{\mathcal{E}_{\omega,1}(\overline{\Omega})}.
\end{align*}
Consequently, the series $\sum_{j=0}^\infty \widehat{\chi_{p_j}\varphi}\theta_j$ converges in $\mathcal{S}^{\omega,l}(\R^d)$ and the mapping
$$
\mathcal{E}_{\omega,1}(\overline{\Omega}) \rightarrow \mathcal{S}^{\omega,l}(\R^d): \varphi \rightarrow \sum_{j=0}^\infty \widehat{\chi_{p_j}\varphi}\theta_j
$$
is continuous. The result now follows from the fact that there is $k \in \Z_+$ such that $\mathcal{F}^{-1}: \mathcal{S}^{\omega,l}(\R^d) \rightarrow \mathcal{S}_{\omega,k}(\R^d)$ is well-defined and continuous.
\end{proof}
\begin{lemma} \label{nodeloos}\cite[Lemma 2.1]{Hormander-1985}
There is $L_0 > 0$ (only depending on the dimension $d$) such that for all $\delta > 0$ there is a sequence $(h_j)_{j \in \N} \subset \mathcal{D}(\R^d)$ such that for all $j \in \N$
\begin{itemize}
\item[$(i)$] $0 \leq h_j \leq 1$, $h_j \equiv 1$ on $B(0,2^j)$ and $\supp h_j \subseteq \overline{B}(0,2^{j+1})$.
\item[$(ii)$] $\|h^{(\alpha)}_j\|_{L^\infty} \leq (L_0\delta)^{|\alpha|}$ for all $|\alpha| \leq 2^j \delta$.
\end{itemize}
\end{lemma}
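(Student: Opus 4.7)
The plan is to manufacture $h_j$ by iterated convolution of scaled bump functions, following the classical device from \cite[Thm.\ 1.4.2]{Hormander}. Fix, once and for all, a nonnegative $\phi \in \mathcal{D}(\R^d)$ with $\supp \phi \subseteq \overline{B}(0,1)$ and $\int \phi \dx = 1$, and set $C_0 := \max_{1 \leq k \leq d} \|\partial_k \phi\|_{L^1}$. The case $2^j\delta < 1$ is trivial: condition $(ii)$ then only constrains $\alpha = 0$, so any fixed smooth cutoff satisfying $(i)$ will do.

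For indices $j$ with $2^j\delta \geq 1$, I would set $N_j := \lceil 2^j\delta \rceil$, $r_j := 2^{j-1}/N_j$, and $\phi_j(x) := r_j^{-d}\phi(x/r_j)$, and define
$$
h_j := \mathbf{1}_{\overline{B}(0,\,3 \cdot 2^{j-1})} \ast \underbrace{\phi_j \ast \cdots \ast \phi_j}_{N_j \text{ factors}}.
$$
The first step is to verify by direct support and mass bookkeeping that $0 \leq h_j \leq 1$, that $\supp h_j \subseteq \overline{B}(0, 2^{j+1})$, and that $h_j \equiv 1$ on $B(0, 2^j)$. This uses only that $\phi_j^{\ast N_j}$ is nonnegative, has integral one, and is supported in $\overline{B}(0, N_j r_j) = \overline{B}(0, 2^{j-1})$.

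The second step is the derivative estimate. The key observation is that $\partial^\alpha$ on a convolution may be placed on any single factor. When $|\alpha| \leq 2^j\delta \leq N_j$, I would distribute the $|\alpha|$ derivatives as single first-order derivatives on $|\alpha|$ distinct copies of $\phi_j$; Young's inequality then bounds $\|\partial^\alpha h_j\|_{L^\infty}$ by $(C_0/r_j)^{|\alpha|}$, and the choice $r_j = 2^{j-1}/N_j$ together with $N_j \leq 2 \cdot 2^j\delta$ (valid since $2^j\delta \geq 1$) converts this into $(4C_0 \delta)^{|\alpha|}$. Setting $L_0 := 4C_0$ finishes the construction.

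The main obstacle is calibrating three free parameters against each other: the number of convolutions $N_j$, their common width $r_j$, and the radius of the outer characteristic function. Each differentiation of a $\phi_j$ costs $\sim 1/r_j$, so achieving $(ii)$ forces $1/r_j \lesssim \delta$; simultaneously the total convolution width $N_j r_j$ must fit inside the transition annulus $\overline{B}(0, 2^{j+1}) \setminus B(0, 2^j)$ to preserve $(i)$, while still leaving enough factors to host the $|\alpha| \leq 2^j\delta$ derivatives. The choice $N_j = \lceil 2^j\delta \rceil$, $r_j = 2^{j-1}/N_j$ is the balance that satisfies all three constraints simultaneously, and the regime $2^j\delta < 1$ is exactly the harmless case where $N_j$ would otherwise drop below one.
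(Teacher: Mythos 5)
The paper does not prove this lemma; it simply cites \cite[Lemma 2.1]{Hormander-1985}. Your construction is correct and is essentially the standard H\"ormander iterated-convolution device (the same one behind \cite[Thm.\ 1.4.2]{Hormander}, which H\"ormander's own argument also rests on): the support and mass bookkeeping gives $(i)$, since $\supp \phi_j^{\ast N_j} \subseteq \overline{B}(0, N_j r_j) = \overline{B}(0, 2^{j-1})$ and $\overline{B}(0,3\cdot 2^{j-1}) + \overline{B}(0,2^{j-1}) = \overline{B}(0,2^{j+1})$; distributing the $|\alpha| \leq \lfloor 2^j\delta\rfloor \leq N_j$ first-order derivatives over distinct $\phi_j$-factors and applying Young's inequality yields $\|\partial^\alpha h_j\|_{L^\infty} \leq (C_0/r_j)^{|\alpha|}$, and $1/r_j = N_j/2^{j-1} \leq 4\delta$ (using $N_j = \lceil 2^j\delta\rceil \leq 2\cdot 2^j\delta$ when $2^j\delta \geq 1$) gives $(ii)$ with $L_0 = 4C_0$, which depends only on $d$ through the fixed choice of $\phi$. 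The trivial regime $2^j\delta < 1$ is correctly dispatched since $(ii)$ then only constrains $\alpha = 0$ and $\|h_j\|_{L^\infty} \leq 1$ follows from $(i)$. No gaps.
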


\begin{proof}[Proof of Theorem \ref{support-2}]
The inclusion $ \mathcal{E}'_{\{\omega\}}[K] \subseteq \mathcal{B}'_{\{\omega\}}(\R^d) \cap \mathcal{A}'[K]$ is trivial. Conversely, let $f \in \mathcal{B}'_{\{\omega_1\}}(\R^d)$ be such that $f \in  \mathcal{B}'_{\{\omega\}}(\R^d) \cap \mathcal{A}'[K]$. This means that there are $g_1 \in \mathcal{B}'_{\{\omega\}}(\R^d)$ and $g_2 \in \mathcal{A}'[K]$ such that $g_{1| \mathcal{D}_{L^1, \{\omega_1\}}(\R^d)} =  g_{2| \mathcal{D}_{L^1, \{\omega_1\}}(\R^d)} = f$. We need to show that there is $f_{\Omega} \in \mathcal{E}'_{\{\omega\}}(\Omega)$ such that $f_{\Omega| \mathcal{D}_{L^1, \{\omega_1\}}(\R^d)} = f$ for all $\Omega \subseteq \R^d$ open with $K \subsetc \Omega$. By Theorem \ref{roum-beur},  there are  a weight function $\sigma$ with $\sigma(t) = o(\omega(t))$ and $\widetilde{g}_1\in \mathcal{B}'_{\{\sigma\}}(\R^d)$ such that $\widetilde{g}_{1| \mathcal{D}_{L^1, \{\omega_1\}}(\R^d)} = g_{1| \mathcal{D}_{L^1, \{\omega_1\}}(\R^d)} = f$. Next, let $\Omega' \Subset \Omega'' \Subset \Omega$ be such that $K \subsetc \Omega'$ and let $(\chi_p)_{p \in \N}$ be an analytic cut-off sequence  supported in $\Omega''$ such that $\chi_p \equiv 1$ on $\Omega'$ for all $p \in \N$. Furthermore, set $\delta = d(K, \partial \Omega')/(2\sqrt{d}L_0e)$ and consider the corresponding sequence $(h_j)_{j \in \N}$ from Lemma \ref{nodeloos}. We define $\theta_0 = h_0$ and $\theta_j = h_j - h_{j-1}$ for $j \in \Z_+$. We divide the proof into three steps.

STEP I:  Lemma \ref{improv} yields that there are a sequence $(p_j)_{j \in \N}$ of natural numbers and $k \in \Z_+$ such that the mapping
$$
\mathcal{E}_{\sigma,1}(\overline{\Omega''}) \rightarrow \mathcal{S}_{\sigma,k}(\R^d): \varphi \rightarrow \sum_{j=0}^\infty(\chi_{p_j}\varphi) \ast \mathcal{F}^{-1}(\theta_j)
$$
is well-defined and continuous, and that the series $\sum_{j=0}^\infty(\chi_{p_j}\varphi) \ast \mathcal{F}^{-1}(\theta_j)$ converges in $ \mathcal{S}_{\sigma,k}(\R^d)$. Since
$\mathcal{E}_{\{\omega\}}(\Omega) \subset \mathcal{E}_{\sigma,1}(\overline{\Omega''})$ and $\mathcal{S}_{\sigma,k}(\R^d) \subset \mathcal{D}_{L^1, \{\sigma\}}(\R^d)$ with continuous inclusions, the mapping
$$
R: \mathcal{E}_{\{\omega\}}(\Omega) \rightarrow  \mathcal{D}_{L^1, \{\sigma\}}(\R^d), \quad R(\varphi) = \sum_{j=0}^\infty(\chi_{p_j}\varphi) \ast \mathcal{F}^{-1}(\theta_j)
$$
is well-defined and continuous, and the series $R(\varphi)$ converges in $\mathcal{D}_{L^1, \{\sigma\}}(\R^d)$.

STEP II: We start by  bounding the inverse Fourier transform of the $\theta_j$.
Since $\|\theta^{(\alpha)}_j\|_{L^\infty} \leq (L_0\delta)^{|\alpha|}$ for all $j \in \N$ and $|\alpha| \leq 2^{j-1} \delta$, we have that
$$
|z|^n |\mathcal{F}^{-1}(\theta_j)(z)| \leq \frac{c_d}{(2\pi)^d} (\sqrt{d}L_0\delta)^{n} (2^{j+1})^{d} e^{2^{j+1} |\operatorname{Im} z| }, \qquad z\in \C^d,
$$
for all $j \in \N$ and $n \in \N$ with $n \leq 2^{j-1} \delta$, where $c_d$ denotes the volume of the unit ball in $\R^d$. By setting $n = \lfloor 2^{j-1} \delta \rfloor$, we obtain that
\begin{align*}
|\mathcal{F}^{-1}(\theta_j)(z)| &\leq \frac{c_d}{(2\pi)^d} \left(\frac{\sqrt{d}L_0\delta}{|z|}\right)^{\lfloor 2^{j-1} \delta \rfloor} (2^{j+1})^{d} e^{2^{j+1}| \operatorname{Im} z| } \\
&\leq \frac{c_de}{(2\pi)^d}  (2^{j+1})^{d} e^{2^{j+1}(| \operatorname{Im} z| - (\delta/4))}, \qquad |z| \geq \sqrt{d}L_0\delta e,
\end{align*}
for all $j \in \N$. Consequently,
\begin{equation}
|\mathcal{F}^{-1}(\theta_j)(z)| \leq \frac{c_de}{(2\pi)^d}  (2^{j+1})^{d} e^{-(2^{j}\delta)/3}, \qquad |z| \geq \sqrt{d}L_0\delta e \, \, \& \, \, |\operatorname{Im} z| \leq \delta/12,
\label{bounds-FT}
\end{equation}
for all $j \in \N$. Next, set
\begin{equation}
U = \{ z \in \C^d \, | \, d(K,z) < \sqrt{d}L_0\delta e  \, \, \& \, \, |\operatorname{Im} z| < \delta/12 \}
\label{UU}
\end{equation}
and choose $\chi \in \mathcal{D}(\Omega'')$  such that $\chi \equiv 1$ on $\Omega'$. We claim that the mapping
$$
L^1(\Omega'') \rightarrow \mathcal{H}^\infty(U) : \varphi \rightarrow \sum_{j=0}^\infty((\chi-\chi_{p_j})\varphi) \ast \mathcal{F}^{-1}(\theta_j) 
$$
is well-defined and continuous, and that the series $\sum_{j=0}^\infty((\chi-\chi_{p_j})\varphi) \ast \mathcal{F}^{-1}(\theta_j)$ converges in $\mathcal{H}^\infty(U)$. Here, $\mathcal{H}^\infty(U)$ denotes the Banach space consisting of all bounded holomorphic functions on $U$ endowed with the supremum norm. Notice that, for all $z \in U$ and $t \in \R^d \backslash \Omega'$, it holds that  $|z-t| > \sqrt{d}L_0\delta e$ and $|\operatorname{Im}(z-t)| = |\operatorname{Im}(z)| < \delta/12$. Hence, \eqref{bounds-FT} implies that
\begin{align*}
&\sum_{j=0}^\infty \|((\chi-\chi_{p_j})\varphi) \ast \mathcal{F}^{-1}(\theta_j) \|_{L^\infty(U)} \\
&\leq \sum_{j=0}^\infty \sup_{z \in U} \int_{\Omega'' \backslash{\Omega'}} |(\chi-\chi_{p_j})(t)| |\varphi(t)| | \mathcal{F}^{-1}(\theta_j)(z-t)| \dt \\
& \leq \sup_{j \in \N}\|\chi-\chi_{p_j}\|_{L^\infty} \|\varphi\|_{L^1(\Omega'')} \sum_{j=0}^\infty \sup_{z \in U, t \in \R^d \backslash \Omega'}  | \mathcal{F}^{-1}(\theta_j)(z-t)| \\
&\leq  C  \|\varphi\|_{L^1(\Omega'')}
\end{align*}
for all $\varphi \in L^1(\Omega'')$, where
$$
C =  \sup_{j \in \N}\|\chi-\chi_{p_j}\|_{L^\infty}   \frac{c_de}{(2\pi)^d} \sum_{j=0}^\infty (2^{j+1})^{d} e^{-(2^{j}\delta)/3} < \infty.
$$
This shows the claim. Next, choose $\Omega_0 \Subset \R^d$  such that $K \subsetc \Omega_0$ and $U$ is a complex open neighbourhood of $\overline{\Omega_0}$. Since $\mathcal{E}_{\{\omega\}}(\Omega) \subset L^1(\Omega'')$ and $\mathcal{H}^\infty(U) \subset \mathcal{A}(\Omega_0)$ with continuous inclusion, the mapping
$$
T: \mathcal{E}_{\{\omega\}}(\Omega) \rightarrow  \mathcal{A}(\Omega_0), \quad T(\varphi) = \sum_{j=0}^\infty((\chi-\chi_{p_j})\varphi) \ast \mathcal{F}^{-1}(\theta_j) 
$$
is well-defined and continuous, and the series $T(\varphi)$ converges in $\mathcal{A}(\Omega_0)$.

STEP III: Since $g_2 \in \mathcal{A}'[K]$ there is $\widetilde{g}_2 \in \mathcal{A'}(\Omega_0)$ such that $\widetilde{g}_{2| \mathcal{A}(\R^d)} = g_2$. Define $f_{\Omega} = R^t(\widetilde{g}_1) + T^t(\widetilde{g}_2) \in \mathcal{E}'_{\{\omega\}}(\Omega)$. We claim that $f_{\Omega|\mathcal{A}(\R^d)} = g_2$ and, thus, $f_{\Omega|\mathcal{D}_{L^1, \{\omega_1\}}(\R^d)} = g_{2| \mathcal{D}_{L^1, \{\omega_1\}}(\R^d)} = f$. By Lemma \ref{dense-1} it suffices to show that $\langle f_\Omega, \varphi \rangle = \langle g_2, \varphi \rangle$ for all $\varphi \in \C[z_1, \ldots, z_d]$. Since, for $j \in \N$ fixed, $(\chi_{p_j}\varphi) \ast \mathcal{F}^{-1}(\theta_j) \in \mathcal{D}_{L^1, \{\omega_1\}}(\R^d)$ and $((\chi-\chi_{j})\varphi) \ast \mathcal{F}^{-1}(\theta_j)  \in \mathcal{A}(\R^d)$, we have that
\begin{align*}
\langle f_\Omega, \varphi \rangle &= \langle \widetilde{g}_1, R(\varphi) \rangle + \langle \widetilde{g}_2, T(\varphi) \rangle \\
&= \langle \widetilde{g}_1,  \sum_{j=0}^\infty (\chi_{p_j}\varphi) \ast \mathcal{F}^{-1}(\theta_j)  \rangle + \langle \widetilde{g}_2,  \sum_{j=0}^\infty((\chi-\chi_{p_j})\varphi) \ast \mathcal{F}^{-1}(\theta_j) \rangle \\
&= \sum_{j=0}^\infty \langle \widetilde{g}_1, (\chi_{p_j}\varphi) \ast \mathcal{F}^{-1}(\theta_j) \rangle +  \sum_{j=0}^\infty \langle \widetilde{g}_2, ((\chi-\chi_{p_j})\varphi) \ast \mathcal{F}^{-1}(\theta_j) \rangle \\
&= \sum_{j=0}^\infty \langle g_2, (\chi_{p_j}\varphi) \ast \mathcal{F}^{-1}(\theta_j) \rangle +  \sum_{j=0}^\infty \langle g_2, ((\chi-\chi_{p_j})\varphi) \ast \mathcal{F}^{-1}(\theta_j) \rangle \\
&= \sum_{j=0}^\infty \langle g_2, (\chi \varphi) \ast \mathcal{F}^{-1}(\theta_j) \rangle 
\end{align*}
Since $g_2 \in \mathcal{A}'[K]$, it suffices to show that there is a complex open neighbourhood $V$ of $K$ such that the series $\sum_{j=0}^\infty(\chi \varphi) \ast \mathcal{F}^{-1}(\theta_j)$ converges to $\varphi$ in $\mathcal{H}^\infty(V)$. Set $S_N(\varphi) =  \sum_{j=0}^N(\chi \varphi) \ast \mathcal{F}^{-1}(\theta_j)$ for $N \in \N$.  Since $\sum_{j=0}^\infty \theta_j = 1$, it holds that $S_N(\varphi) \rightarrow \chi \varphi$ in $\mathcal{S}(\R^d)$. Consequently,  as $\chi \equiv 1$ on $\Omega'$, we  have that $S_N(\varphi) \rightarrow  \varphi$ in $C^\infty(\Omega')$. Next, fix $\alpha \in \N^d$ with $|\alpha| > \operatorname{deg} \varphi$ and consider
$$
S_N(\varphi)^{(\alpha)} = \sum_{j=0}^N \left( \sum_{\beta \leq \alpha} \binom{\alpha}{\beta}\chi^{(\beta)} \varphi^{(\alpha-\beta)} \right) \ast \mathcal{F}^{-1}(\theta_j).
$$
For each term in the sum $\sum_{\beta \leq \alpha} \binom{\alpha}{\beta}\chi^{(\beta)} \varphi^{(\alpha-\beta)}$ with $\varphi^{(\alpha-\beta)} \neq 0$ it holds that $\beta \neq 0$ and, thus, $\chi^{(\beta)} \equiv 0$ on $\Omega'$. In particular, $S_N(\varphi)^{(\alpha)} \equiv 0$ on $\Omega'$ for all $N \in \N$. Moreover, by using a similar argument as in STEP II, one can show that the sequence $(S_N(\varphi)^{(\alpha)})_{N \in \N}$ is convergent in  $\mathcal{H}^\infty(U)$, where $U$ is defined in $\eqref{UU}$. Hence, it must hold that $S_N(\varphi)^{(\alpha)} \rightarrow 0$ in  $\mathcal{H}^\infty(U)$. The result now follows from Taylor's formula.
\end{proof}

\section{The main result}\label{sect-main}
We are ready to prove the main result of this article. 
\begin{theorem}\label{main-result}
Let $\Omega \subseteq \R^d$ be open and suppose that $\omega$ is quasianalytic. The $(PLS)$-space $\mathcal{E}_{\{\omega\}}(\Omega)$ satisfies the dual interpolation estimate for small theta. 
\end{theorem}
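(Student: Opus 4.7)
The plan is to apply Lemma \ref{intersection} to the dual inductive spectrum of the standard reduced projective representation of $\mathcal{E}_{\{\omega\}}(\Omega)$. Fix an exhaustion $(K_N)_{N \in \N}$ of $\Omega$ by regular compact subsets with $K_N \subsetc K^\circ_{N+1}$, giving the reduced projective representations by $(DFS)$-spaces
$$
\mathcal{E}_{\{\omega\}}(\Omega) = \varprojlim_N \mathcal{E}_{\{\omega\}}(K_N), \qquad \mathcal{A}(\Omega) = \varprojlim_N \mathcal{A}(K_N).
$$
The goal is to show that the dual inductive spectrum $\mathfrak{X}^{\ast} = (\mathcal{E}'_{\{\omega\}}(K_N))_{N}$ satisfies the interpolation estimate for small theta. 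Take as auxiliary spectra $\mathfrak{Y} = (\mathcal{A}'[K_N])_N$, which satisfies the interpolation estimate for small theta by result $(iii)$ from the introduction, and the constant spectrum $\mathfrak{Z} = (\mathcal{B}'_{\{\omega\}}(\R^d))_N$, which satisfies it by Theorem \ref{DN-bar} combined with Remark \ref{DN-remark}.

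The continuous inclusions $\mathcal{A}(K_N) \hookrightarrow \mathcal{E}_{\{\omega\}}(K_N)$ and $\mathcal{D}_{L^1, \{\omega\}}(\R^d) \to \mathcal{E}_{\{\omega\}}(K_N)$ (the second obtained by composing $\mathcal{D}_{L^1,\{\omega\}}(\R^d) \hookrightarrow \mathcal{E}_{\{\omega\}}(\R^d)$ from Lemma \ref{dense-2} with the restriction map) dualize to give continuous spectral-compatible mappings
$$
T_N : \mathcal{E}'_{\{\omega\}}(K_N) \to \mathcal{A}'[K_N], \qquad S_N : \mathcal{E}'_{\{\omega\}}(K_N) \to \mathcal{B}'_{\{\omega\}}(\R^d).
$$
The crucial step is then to show that $f \mapsto (T_N(f), S_N(f))$ is a topological embedding. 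Injectivity follows from the density of $\C[z_1, \ldots, z_d]$ in $\mathcal{E}_{\{\omega\}}(K_N)$ (Lemma \ref{dense-1}) together with $\C[z_1, \ldots, z_d] \subset \mathcal{A}(K_N)$. When both targets are viewed in the common Hausdorff ambient $\mathcal{B}'_{\{\omega_1\}}(\R^d)$ via Lemma \ref{dense-2}, one has $T_N(f) = S_N(f)$ (both restrict to $f(\,\cdot\,|_{K_N})$ and agree on polynomials). Hence the image of $(T_N, S_N)$ is contained in the closed subspace
$$
D_N := \{ (g_1, g_2) \in \mathcal{A}'[K_N] \times \mathcal{B}'_{\{\omega\}}(\R^d) : g_1 = g_2 \text{ in } \mathcal{B}'_{\{\omega_1\}}(\R^d) \},
$$
and under the canonical identification $\mathcal{E}'_{\{\omega\}}(K_N) \cong \mathcal{E}'_{\{\omega\}}[K_N]$ the image is \emph{all} of $D_N$, by Theorem \ref{support-2}. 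The continuous linear bijection $\mathcal{E}'_{\{\omega\}}(K_N) \to D_N$ between Fr\'echet spaces is then a topological isomorphism by the open mapping theorem, so $(T_N, S_N)$ is a topological embedding into the product.

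Lemma \ref{intersection} then gives that $\mathfrak{X}^{\ast}$ satisfies the interpolation estimate for small theta, i.e.\ $\mathcal{E}_{\{\omega\}}(\Omega)$ satisfies the dual interpolation estimate for small theta. The main obstacle is the embedding step: one has to combine the full force of the improved support theorem (Theorem \ref{support-2}) with the identification of the dual of the $(DFS)$-space $\mathcal{E}_{\{\omega\}}(K_N)$ with $\mathcal{E}'_{\{\omega\}}[K_N]$, so that the image of $(T_N, S_N)$ really is the closed Fr\'echet subspace $D_N$ and not merely a dense subspace of it; without closedness, the open mapping theorem is not available and the topological (as opposed to purely set-theoretic) character of the embedding is lost.
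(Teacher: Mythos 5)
Your proposal is correct and follows essentially the same route as the paper: reduce to the interpolation estimate for the dual inductive spectrum, embed it into the product spectrum $(\mathcal{A}'[K_N] \times \mathcal{B}'_{\{\omega\}}(\R^d))_N$ via restriction maps, identify the (closed) image using Theorem~\ref{support-2}, and invoke Lemma~\ref{intersection} together with Theorem~\ref{DN-bar}, Remark~\ref{DN-remark} and the known result for $\mathcal{A}(\Omega)$. The only cosmetic difference is that you start from the spectrum $(\mathcal{E}_{\{\omega\}}(K_N))_N$ and assert a ``canonical identification'' of $(\mathcal{E}_{\{\omega\}}(K_N))'$ with $\mathcal{E}'_{\{\omega\}}[K_N]$, whereas the paper sidesteps this by working directly with the germ spectrum $(\mathcal{E}_{\{\omega\}}[K_N])_N$ and checking that its dual inductive spectrum is the spectrum of carrier spaces $(\mathcal{E}'_{\{\omega\}}[K_N])_N$ -- a cleaner way to get exactly what Theorem~\ref{support-2} and the open mapping (equivalently, closed range) theorem require.
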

\begin{remark}\label{DIE-remark}
As mentioned in the introduction, Bonet and Doma\'nski \cite[Thm.\ 2.1]{B-D-2007} showed that $\mathcal{E}_{\{\omega\}}(\Omega)$ satisfies the dual interpolation estimate for small theta if $\Omega$ is convex and $\omega$ satisfies the following stronger version of $(\alpha)$:
\begin{itemize}
\item[$(\alpha_1)$]  $\displaystyle \sup_{\lambda \geq 1} \limsup_{t \to \infty} \frac{\omega(\lambda t)}{\lambda \omega(t)} < \infty$.
\end{itemize}
Condition $(\alpha_1)$ is equivalent to the existence of a subadditive weight function $\sigma$ such that $\omega \asymp \sigma$ \cite[Prop.\ 1.1]{Petzsche}. Furthermore, they showed that $\mathcal{A}(\Omega)$, with $\Omega \subseteq \R^d$ arbitrary open, satisfies the dual interpolation estimate for small theta \cite[Cor.\ 2.2]{B-D-2007}. 
\end{remark}
In the rest of this section,  we assume that $\omega$ is quasianalytic. We need some preparation for the proof of Theorem \ref{main-result}. Let $K \subsetc  \R^d$. Choose a sequence $(\Omega_n)_{n \in \N}$ of relatively compact open subsets  in $\R^d$ such that $K \subsetc \Omega_n$, every connected component of $\Omega_n$ meets $K$ and $\Omega_{n+1} \Subset \Omega_n$ for all $n \in \N$ and $K = \bigcap_{n \in \N} \Omega_n$. We define the space \emph{of ultradifferentiable germs  of class $\{\omega\}$ (of Roumieu type) on $K$ } as
$$
\mathcal{E}_{\{\omega\}}[K] := \varinjlim_{n \in \N} \mathcal{E}_{\omega,n}(\overline{\Omega}_n). 
$$
This definition is clearly independent of the chosen sequence $(\Omega_n)_{n \in \N}$. $\mathcal{E}_{\{\omega\}}[K]$ is a $(DFS)$-space (the restriction mappings $\mathcal{E}_{\omega,n}(\overline{\Omega}_n) \rightarrow \mathcal{E}_{\omega,n+1}(\overline{\Omega}_{n+1})$ are injective because  $\omega$ is quasianalytic). Next, let $\Omega \subseteq \R^d$ be open. Choose a sequence $(K_N)_{N \in \N}$ of compact subsets in $\Omega$ such that $K_N \subsetc  K^\circ_{N+1}$ for all $N \in \N$ and $\Omega = \bigcup_{N \in \N} K_N$. Consider the projective spectrum  $(\mathcal{E}_{\{\omega\}}[K_N], r^{N}_{N+1})_{N \in \N}$, where $r^N_{N+1}: \mathcal{E}_{\{\omega\}}[K_{N+1}] \rightarrow \mathcal{E}_{\{\omega\}}[K_N]$ denotes the natural restriction mapping. Then, 
$$
\mathcal{E}_{\{\omega\}}(\Omega) = \varprojlim_{N \in \N} \mathcal{E}_{\{\omega\}}[K_N]
$$
as lcHs and the spectrum $(\mathcal{E}_{\{\omega\}}[K_N], r^{N}_{N+1})_{N \in \N}$ is reduced by Lemma \ref{dense-1}. Moreover, the dual inductive spectrum $((\mathcal{E}_{\{\omega\}}[K_N])', (r^{N}_{N+1})^t)_{N \in \N}$ is equivalent to the inductive spectrum $(\mathcal{E}'_{\{\omega\}}[K_N], \iota^{N}_{N+1})_{N \in \N}$, where $\iota^N_{N+1}: \mathcal{E}'_{\{\omega\}}[K_{N}] \rightarrow \mathcal{E}'_{\{\omega\}}[K_{N+1}]$ denotes the inclusion mapping. Indeed, if we write $r^N: \mathcal{E}_{\{\omega\}}(\R^d) \rightarrow \mathcal{E}_{\{\omega\}}[K_N]$ for the natural restriction mapping, then $(r^N)^t: (\mathcal{E}_{\{\omega\}}[K_N])' \rightarrow \mathcal{E}'_{\{\omega\}}[K_N]$ is a topological isomorphism  such that
$\iota^N_{N+1} \circ (r^N)^t = (r^{N+1})^t \circ (r^{N}_{N+1})^t$ for all $N \in \N$.
\begin{proof}[Proof of Theorem \ref{main-result}] We use the same notation as above. By Lemma \ref{basic-theta}$(i)$, it suffices to show that the inductive spectrum $\mathfrak{X} = (\mathcal{E}'_{\{\omega\}}[K_N], \iota^{N}_{N+1})_{N \in \N}$ satisfies the interpolation estimate for small theta. We define the inductive spectra $\mathfrak{Y} = (\mathcal{B}'_{\{\omega\}}(\R^d), \operatorname{id}_{\mathcal{B}'_{\{\omega\}}(\R^d)})_{N \in \N}$ and $\mathfrak{Z} = (\mathcal{A}'[K_N], \mu^{N}_{N+1})_{N \in \N}$, where $\mu^N_{N+1}: \mathcal{A}'[K_{N}] \rightarrow \mathcal{A}'[K_{N+1}]$ denotes the inclusion mapping. $\mathfrak{Y}$ satisfies the interpolation estimate for small theta because of  Theorem \ref{DN-bar} and Remark \ref{DN-remark}, while  $\mathfrak{Z}$ satisfies the interpolation estimate for small theta because of the fact that $\mathcal{A}(\Omega)$ satisfies the dual interpolation estimate for small theta \cite[Cor.\ 2.2]{B-D-2007} and Lemma \ref{basic-theta}$(i)$. Next, for each $N \in \N$, consider the continuous linear mappings
$$
T_N : \mathcal{E}'_{\{\omega\}}[K_N] \rightarrow \mathcal{B}'_{\{\omega\}}(\R^d): f \rightarrow f_{|\mathcal{D}_{L^1, \{\omega\}}(\R^d)}
$$
and
$$
S_N : \mathcal{E}'_{\{\omega\}}[K_N] \rightarrow \mathcal{A}'[K_N]: f \rightarrow f_{|\mathcal{A}(\R^d)}
$$
Clearly, $T_{N+1} \circ \iota^N_{N+1} =  \operatorname{id}_{\mathcal{B}'_{\{\omega\}}(\R^d)} \circ T_N$ and $S_{N+1} \circ \iota^N_{N+1} = \mu^N_{N+1} \circ S_N$ . Moreover, Theorem \ref{support-2} and the closed range theorem imply that
$$
\mathcal{E}'_{\{\omega\}}[K_N]  \rightarrow \mathcal{B}'_{\{\omega\}}(\R^d) \times \mathcal{A}'[K_N] : f \rightarrow (T_N(f), S_N(f))
$$
is a topological embedding. Hence, Lemma \ref{intersection} yields that  $\mathfrak{X}$ satisfies the interpolation estimate for small theta.
\end{proof}
%$$
%\mathcal{E}_{\{\omega\}}[K] := \varinjlim_{n \in  \N} \mathcal{E}^n_{\omega}(\overline{\Omega}_n).
%$$
%This definition is clearly independent of the chosen sequence $(\Omega_n)_{n \in \N}$. Then, $\mathcal{E}_{\{\omega\}}[K]$ is a $(DFS)$-space. Notice that
%$$
%\mathcal{E}_{\{\omega\}}[K] = \varinjlim_{K \subsetc \Omega} \mathcal{E}_{\{\omega\}}(\Omega)
%$$
%as locally convex spaces. Hence, by Lemma \ref{dense-1}, 
%we may identify the dual space $\mathcal{E}'_{\{\omega\}}[K]$  with the space consisting of all  $f \in \mathcal{E}'_{\{\omega\}}(\R^d)$ such that for all $\Omega \subseteq \R^d$ open with  $K \Subset \Omega$ there is  $g \in  \mathcal{E}'_{\{\omega\}}(\Omega)$ such that $g_{| \mathcal{E}'_{\{\omega\}}(\R^d)} = f$.

\end{document}